\newtheorem{thm}{Theorem}[section]
\newtheorem{prop}[thm]{Proposition}
\newtheorem{lem}[thm]{Lemma}
\newtheorem{rem}[thm]{Remark}
\numberwithin{equation}{section}
\title{Initial value space of the four dimensional Painlev\'{e} system with $(A_5+A_1)^{(1)}$ symmetry}
\author{Kazuya Matsugashita \thanks{Graduate School of Science and Engineering, Kindai University, 3-4-1, Kowakae, Higashi-Osaka, Osaka 577-8502, Japan. E-mail: 2444310104w@kindai.ac.jp} \and Takao Suzuki \thanks{Department of Mathematics, Kindai University, 3-4-1, Kowakae, Higashi-Osaka, Osaka 577-8502, Japan. E-mail: suzuki@math.kindai.ac.jp}}
\date{}
\begin{document}
\maketitle

\begin{abstract}
The initial value spaces of the Painlev\'{e} equations are proposed by Okamoto.
They are symplectic manifolds in which the Painlev\'{e} equations are described as polynomial Hamiltonian systems on all coordinates. 
In this article, we construct an initial value space of the four dimensional Painlev\'{e} system with affine Weyl group symmetry of type $(A_5+A_1)^{(1)}$.

Key Words: Painlev\'{e} system, Affine Weyl group, Initial value space.

2010 Mathematics Subject Classification: 34M55, 33E17, 37K35.
\end{abstract}

\section{Introduction}\label{sec:intro}

The Painlev\'{e} equations are nonlinear ordinary differential equations of second order.
Each of them is expressed as a polynomial Hamiltonian system.
Since it has movable poles, we have to consider compactification of its phase space for the analytic continuation.
Then there are infinitely many solutions passing through a point called the accessible singularity in the obtained compact phase space.
Hence it becomes the next problem to give a phase space separating those solutions.
This problem was solved by Okamoto in \cite{Oka} with the aid of the blowing up method.
The obtained space is now called the initial value space (IVS).

Afterward, Takano and his collaborators gave canonical coordinates for the IVSs of the Painlev\'{e} equations explicitly on which the Hamiltonians are polynomials in \cite{MMT,Mat,ST}.
They also showed that each Painlev\'{e} equation is a unique Hamiltonian system which is holomorphic on all coordinates of the IVS.
This property is now called the holomorphy property.
Besides, a relationship between the symmetries for the Painlev\'{e} equations and the IVSs is pointed out in \cite{NTY}.

Sakai classified the discrete Painlev\'{e} equations from viewpoints of symmetries and IVSs in \cite{Sak1}.
According to that, both symmetries and IVSs are labeled in terms of the affine root systems and the continuous Painlev\'{e} equations can be regarded as continuous flows commuting with some discrete Painlev\'{e} equations.
On the other hand, several generalizations of the Painlev\'{e} equations were proposed in \cite{FS,Gar,NY,SY,SuzT2,Tsu}.
Hence we expect that there exists a classification theory of them from viewpoints of symmetries and IVSs.

Some IVSs of the generalized Painlev\'{e} systems have been investigated as follows.
\begin{center}
\begin{tabular}{|c|c|c|c|}\hline
Painlev\'{e} system& differential & difference (symmetry) & holomorphy \\ \hline
Garnier & \cite{Kim}, \cite{SuzM} & \cite{Tak2} & \cite{Sas1} \\ \hline
Noumi-Yamada & \cite{Tah} & \cite{CT} & \cite{SY} \\ \hline
Sasano & & & \cite{SY} \\ \hline
FST & & \cite{Tak1} & \cite{Sas2} \\ \hline
\end{tabular}
\end{center}
Kimura constructed the IVS of the Garnier system in $n$ variables in \cite{Kim} and Suzuki constructed those of the degenerate Garnier systems in $2$ variables in \cite{SuzM}.
In both works, the blowing up method is utilized.
Tahara constructed the IVS of a four dimensional Noumi-Yamada system by using Laurent series expansions of solutions in \cite{Tah}.
Takenawa and Carstea constructed the IVSs of some systems by resolving singularities of symmetries and gave geometric interpretations in terms of the N\'{e}ron-Severi bilattices.
Sasano derived those systems from the holomorphy properties.
In this article, we construct an IVS of the four dimensional FST system.

This article is organized as follows.
In Section \ref{sec:pre}, we recall the definition of initial value space and the explicit formula of the four dimensional FST system with its symmetry.
In Section \ref{sec:compact}, we construct a compact manifold of the phase space of system \eqref{eq:FST} named $\Sigma_\eta$.
In Section \ref{sec:as}, the accessible singularities on $\Sigma_\eta$ are determined.
As a result, two types of the accessible singularities $C_j^{(kl)}$ and $P_j^{(kl)}$ are obtained.
In Section \ref{sec:resolve_C}, we resolve $C_j^{(kl)}$ in two ways, the blowing up method and utilizing formal power series.
In Section \ref{sec:resolve_P}, we resolve $P_j^{(kl)}$ by using formal power series and the symmetry.
In Section \ref{sec:main}, the main result of this article is given.
Some proofs of lemmas are placed in Appendix \ref{sec:appendix} and \ref{sec:appendixB}.

\section{Preliminary}\label{sec:pre}

\subsection{Initial value space}

We prepare some terminologies concerning the initial value space following the previous work \cite{Kim,Oka}.
Let $\mathcal{Q}_0=(\mathbb{C}^m\times B,\pi,B)$ be a trivial fiber space with a total space $\mathbb{C}^m\times B$, a base space $B\subset\mathbb{C}^n$ and a projection $\pi$. 
For each $t_0\in B$, an inverse image $\pi^{-1}(t_0)$ is called a fiber of $\mathcal{Q}_0$.
We consider a Pfaff system in $\mathcal{Q}_0$
\begin{equation}\label{eq:pfaff}
	dx_i - \sum_{j=1}^{n}X_{ij}dt_j = 0\quad (i=1,\ldots,m),
\end{equation}
where $X_{ij}\in\mathbb{C}(t_1,\ldots,t_n)[x_1,\ldots,x_m]$.
We always assume that the base space $B$ does not contain any pole of $X_{ij}$.
We also assume that any solutions to system \eqref{eq:pfaff} can be continued meromorphically to the whole $B$.
Then system \eqref{eq:pfaff} defines a foliation $\mathcal{F}_0$ whose leaves are in one-to-one correspondence with solutions to system \eqref{eq:pfaff}.

We call a leaf of $\mathcal{F}_0$ which has at most one intersection with any fiber of $\mathcal{Q}_0$ a transversal leaf.
We also call a leaf which is entirely contained in a fiber of $\mathcal{Q}_0$ a vertical leaf. 
If a foliation $\mathcal{F}_0$ satisfies the following two conditions, then it is said to be uniform.
\begin{enumerate}
\item[(i)]
All leaves of $\mathcal{F}_0$ are transversal leaves.
\item[(ii)]
For any path $\gamma$ in $B$, if there exists a point which is an intersection of a leaf and the fiber over the starting point of $\gamma$, then the path $\gamma$ can be lifted in the leaf.
\end{enumerate}
Since system \eqref{eq:pfaff} is holomorphic, all leaves of $\mathcal{F}_0$ are transversal leaves.
However, the foliation $\mathcal{F}_0$ is not uniform in general because $\mathbb{C}^m$ is not compact.
Thus we consider a fiber space $\mathcal{Q}=(E,\pi,B)$ satisfying the following three conditions.
\begin{enumerate}
\item[(i)]
The fiber space $\mathcal{Q}_0$ is a fiber subspace of $\mathcal{Q}$.
\item[(ii)]
The foliation $\mathcal{F}_0$ is extended to an uniform foliation $\mathcal{F}$ in the total space $E$.
\item[(iii)]
All leaves of $\mathcal{F}$ intersect with $\mathbb{C}^m\times B$.
\end{enumerate}
Then the fiber space $\mathcal{Q}$ is said to be a P-fibration for system \eqref{eq:pfaff} and a fiber of $\mathcal{Q}$ is said to be a initial value space of system \eqref{eq:pfaff}.

In order to construct a P-fibration for system \eqref{eq:pfaff}, we consider a compactification of $\mathbb{C}^m$, which is named $\Sigma$.
Then there exists a subspace of $\left(\Sigma\setminus\mathbb{C}^m\right)\times B$ whose point is contained in the closure of a leaf of $\mathcal{F}_0$.
This subspace is called accessible singularity.
The condition (ii) for uniform is often not satisfied at the accessible singularity.
In such a case, we have to resolve the singularity with the aid of the blowing up method.
 
\subsection{Four dimensional FST system}

The four dimensional FST system is derived from a similarity reduction of the Drinfeld-Sokolov hierarchy (\cite{FS}) or the UC hierarchy (\cite{Tsu}) and is expressed as the polynomial Hamiltonian system
\begin{equation}\label{eq:FST}\begin{split}
	&dq_i = \frac{\partial H}{\partial p_i}dt,\quad
	dp_i = -\frac{\partial H}{\partial q_i}dt\quad(i=1,2),\\
	&H = H_{\rm VI}\left[\alpha_3+\alpha_5-\eta,\alpha_0,\alpha_2+\alpha_4,\alpha_1\eta;q_1,p_1\right] + H_{\rm VI}\left[\alpha_1+\alpha_5-\eta,\alpha_0+\alpha_2,\alpha_4,\alpha_3\eta;q_2,p_2\right]\\
	&\qquad+ \frac{(q_1-1)(q_2-t)\left\{(q_1p_1+\alpha_1)p_2+(q_2p_2+\alpha_3)p_1\right\}}{t(t-1)},
\end{split}\end{equation}
where
\[
	H_{\rm{VI}}[a,b,c,d;q,p]
	= \frac{q(q-1)(q-t)p^2-\{a(q-1)(q-t)+bq(q-t)+(c-1)q(q-1)\}p+dq}{t(t-1)}
\]
and
\[
	\alpha_0 + \alpha_1 + \alpha_2 + \alpha_3 + \alpha_4 + \alpha_5 = 1.
\]
Here the independent variable $t$ is defined in $B=\mathbb{C}\setminus\{0,1\}$.
The symmetry of system \eqref{eq:FST} was first given in \cite{Tsu}.
Afterward, it was shown that the group of the symmetry is isomorphic to an extended affine Weyl group of type $(A_5+A_1)^{(1)}$ in \cite{MS}.
According to that, this group is generated by the birational transformations $r_0,\ldots,r_5,r'_0,\pi_1,\pi_2,\rho$.
Their actions on the parameters are given as follows.
\begin{align*}\begin{array}{|c|c|c|c|c|c|c|c|c|}\hline
	 & \alpha_0 & \alpha_1 & \alpha_2 & \alpha_3 & \alpha_4 & \alpha_5 & \eta \\ \hline
	r_0 & -\alpha_0 & \alpha_1+\alpha_0 & \alpha_2 & \alpha_3 & \alpha_4 & \alpha_5+\alpha_0 & \eta+\alpha_0 \\ \hline
	r_1 & \alpha_0+\alpha_1 & -\alpha_1 & \alpha_2+\alpha_1 & \alpha_3 & \alpha_4 & \alpha_5 & \eta-\alpha_1 \\ \hline
	r_2 & \alpha_0 & \alpha_1+\alpha_2 & -\alpha_2 & \alpha_3+\alpha_2 & \alpha_4 & \alpha_5 & \eta+\alpha_2  \\ \hline
	r_3 & \alpha_0 & \alpha_1 & \alpha_2+\alpha_3 & -\alpha_3 & \alpha_4+\alpha_3 & \alpha_5 & \eta-\alpha_3 \\ \hline
	r_4 & \alpha_0 & \alpha_1 & \alpha_2 & \alpha_3+\alpha_4 & -\alpha_4 & \alpha_5+\alpha_4 & \eta+\alpha_4 \\ \hline
	r_5 & \alpha_0+\alpha_5 & \alpha_1 & \alpha_2 & \alpha_3 & \alpha_4+\alpha_5 & -\alpha_5 & \eta-\alpha_5 \\ \hline
	r_0' & \alpha_0 & \alpha_1 & \alpha_2 & \alpha_3 & \alpha_4 & \alpha_5 & -\eta+\alpha_1+\alpha_3+\alpha_5 \\ \hline
	\pi_1 & \alpha_1 & \alpha_2 & \alpha_3 & \alpha_4 & \alpha_5 & \alpha_0 & \eta+\alpha_0+\alpha_2+\alpha_4 \\ \hline
	\pi_2 & \alpha_2 & \alpha_3 & \alpha_4 & \alpha_5 & \alpha_0 & \alpha_1 & \eta \\ \hline
	\rho & \alpha_0 & \alpha_5 & \alpha_4 & \alpha_3 & \alpha_2 & \alpha_1 & -\eta+\alpha_1+\alpha_3+\alpha_5 \\ \hline
\end{array}\end{align*}
Their actions on the dependent and independent variables are given by
\begin{align*}
	&r_0(q_i) = q_i\quad (i=1,2),\quad
	r_0(p_1) = p_1 - \frac{\alpha_0}{q_1-1},\quad
	r_0(p_2) = p_2,\quad
	r_0(t) = t,\\
	&r_1(q_1) = q_1 + \frac{\alpha_1}{p_1},\quad
	r_1(q_2) = q_2,\quad
	r_1(p_i) = p_i\quad (i=1,2),\quad
	r_1(t) = t,\\
	&r_2(q_i) = q_i\quad (i=1,2),\quad
	r_2(p_1) = p_1 - \frac{\alpha_2}{q_1-q_2},\quad
	r_2(p_2) = p_2 + \frac{\alpha_2}{q_1-q_2},\quad
	r_2(t) = t,\\
	&r_3(q_1) = q_1,\quad
	r_3(q_2) = q_2 + \frac{\alpha_3}{p_2},\quad
	r_3(p_i) = p_i\quad (i=1,2),\quad
	r_3(t) = t,\\
	&r_4(q_i) = q_i\quad (i=1,2),\quad
	r_4(p_1) = p_1,\quad
	r_4(p_2) = p_2 - \frac{\alpha_4}{q_2-t},\quad
	r_4(t) = t,\\
	&r_5(q_i) = q_i \frac{q_1p_1+q_2p_2+\eta}{q_1p_1+q_2p_2-\alpha_5+\eta},\quad
	r_5(p_i) = p_i \frac{q_1p_1+q_2p_2-\alpha_5+\eta}{q_1p_1+q_2p_2+\eta}\quad (i=1,2),\quad
	r_5(t) = t,\\
	&r'_0(q_i) = \frac{1}{q_i},\quad
	r'_0(p_i) = -q_i(q_ip_i+\alpha_{2i-1})\quad (i=1,2),\quad
	r'_0(t) = \frac{1}{t},\\
	&\pi_1(q_1) = \frac{(q_1-t)p_1+(q_2-t)p_2+\eta}{(q_1-1)p_1+(q_2-t)p_2+\eta},\quad
	\pi_1(q_2) = \frac{(q_1-t)p_1+(q_2-t)p_2+\eta}{(q_1-1)p_1+(q_2-1)p_2+\eta},\\
	&\pi_1(q_1p_1) = \frac{q_1-q_2}{t-1}\left\{(q_1-1)p_1+(q_2-t)p_2+\eta\right\}-\alpha_2,\\
	&\pi_1(q_2p_2) = \frac{q_2-t}{t-1}\{(q_1-1)p_1+(q_2-1)p_2+\eta\}-\alpha_4,\quad
	\pi_1(t) = t,\\
	&\pi_2(q_1) = \frac{q_2}{q_1},\quad
	\pi_2(q_2) = \frac{t}{q_1},\quad
	\pi_2(p_1) = q_1p_2,\quad
	\pi_2(p_2) = -\frac{q_1(q_1p_1+q_2p_2+\eta)}{t},\quad
	\pi_2(t) = t,\\
	&\rho(q_1) = q_1,\quad
	\rho(q_2) = \frac{tq_1}{q_2},\quad
	\rho(p_1) = \frac{q_1p_1+q_2p_2-\alpha_5+\eta}{q_1},\quad
	\rho(p_2) = -\frac{q_2(q_2p_2+\alpha_3)}{tq_1},\quad
	\rho(t) = t.
\end{align*}
System \eqref{eq:FST} is invariant under their actions.
Moreover, groups $\langle r_0,\ldots,r_5\rangle$ and $\langle r'_0,r'_0\pi_2^{-1}\pi_1^2\rangle$ are isomorphic to the affine Weyl group of type $A_5^{(1)}$ and $A_1^{(1)}$ respectively.

\begin{rem}
It is known that a Schlesinger system enjoys the Painlev\'{e} property (see \cite{IS,Mal,Miwa}).
On the other hand, the coordinates of system \eqref{eq:FST} are rational in the coordinates of a certain Schlesinger system (see \cite{Sak2,Tsu}).
Hence we can see that system \eqref{eq:FST} enjoys the Painlev\'{e} property.
\end{rem}

\section{Compactification of the initial phase space}\label{sec:compact}

In this section, we construct a manifold $\Sigma_{\eta}$ which is a $\mathbb{P}^2$ bundle over $\mathbb{P}^2$ as a compactification of the phase space $\mathbb{C}^4$ of system \eqref{eq:FST} following the previous work \cite{Kim,SuzM}.

Let $\xi=[\xi_0:\xi_1:\xi_2]$ be homogeneous coordinate of $\mathbb{P}^2$ and set
\[
	U_i = \left\{\xi=[\xi_0:\xi_1:\xi_2]\in\mathbb{P}^2\bigm|\xi_i\neq0\right\}\quad(i=0,1,2).
\]
Using the homogeneous coordinate $\xi$, we define affine coordinates as
\[
	(q_1,q_2) = \left(\dfrac{\xi_1}{\xi_0},\dfrac{\xi_2}{\xi_0}\right),\quad
	\left(q_1^{(1)},q_2^{(1)}\right) = \left(\dfrac{\xi_0}{\xi_1},\dfrac{\xi_2}{\xi_1}\right),\quad
	\left(q_1^{(2)},q_2^{(2)}\right) = \left(\dfrac{\xi_1}{\xi_2},\dfrac{\xi_0}{\xi_2}\right).
\]
They imply relations
\[
	(q_1,q_2) = \left(\frac{1}{q_1^{(1)}},\frac{q_2^{(1)}}{q_1^{(1)}}\right) = \left(\frac{q_1^{(2)}}{q_2^{(2)}},\frac{1}{q_2^{(2)}}\right),\quad
	\left(q_1^{(1)},q_2^{(1)}\right) = \left(\frac{1}{q_1},\frac{q_2}{q_1}\right),\quad
	\left(q_1^{(2)},q_2^{(2)}\right) = \left(\frac{q_1}{q_2},\frac{1}{q_2}\right).
\]
We also define affine coordinates of direct products $W_{i0}=U_i\times\mathbb{C}^2\ (i=0,1,2)$ by
\[
	W_{00} : (q_1,q_2,p_1,p_2),\quad
	W_{10} : \left(q_1^{(1)},q_2^{(1)},p_1^{(1)},p_2^{(1)}\right),\quad
	W_{20} : \left(q_1^{(2)},q_2^{(2)},p_1^{(2)},p_2^{(2)}\right)
\]
with relations
\begin{equation}\label{eq:rel_q}\begin{split}
	&\left(q_1^{(1)},q_2^{(1)},p_1^{(1)},p_2^{(1)}\right) = \left(\frac{1}{q_1},\frac{q_2}{q_1},-q_1(q_1p_1+q_2p_2+\eta),q_1p_2\right),\\
	&\left(q_1^{(2)},q_2^{(2)},p_1^{(2)},p_2^{(2)}\right) = \left(\frac{q_1}{q_2},\frac{1}{q_2},q_2p_1,-q_2(q_1p_1+q_2p_2+\eta)\right).
\end{split}\end{equation}
We set a manifold $\Sigma_\eta^0$ as the quotient space of $W_{00}\cup W_{10}\cup W_{20}$ by relation \eqref{eq:rel_q}.
Note that the manifold $\Sigma_\eta^0$ can be regarded as a $\mathbb{C}^2$ bundle over $\mathbb{P}^2$.

Let $\zeta^{(i)}=\left[\zeta_0^{(i)}:\zeta_1^{(i)}:\zeta_2^{(i)}\right]\ (i=0,1,2)$ be homogenous coordinates of $\mathbb{P}^2$ as
\[
	p_i = \frac{\zeta_i^{(0)}}{\zeta_0^{(0)}},\quad
	p_i^{(1)} = \frac{\zeta_i^{(1)}}{\zeta_0^{(1)}},\quad
	p_i^{(2)} = \frac{\zeta_i^{(2)}}{\zeta_0^{(2)}}\quad (i=1,2).
\]
We set
\[
	W_{ij} = U_i\times \left\{\zeta^{(i)}=\left[\zeta_0^{(i)}:\zeta_1^{(i)}:\zeta_2^{(i)}\right]\in\mathbb{P}^2\bigm|\zeta_j^{(i)}\neq0\right\},\quad
	W_i = W_{i0}\cup W_{i1}\cup W_{i2}\quad (i,j=0,1,2).
\]
Note that $W_{ij}\simeq\mathbb{C}^4$ and $W_i\simeq\mathbb{C}^2\times\mathbb{P}^2$.
Then relation \eqref{eq:rel_q} is extended to those
\begin{equation}\label{eq:rel_hom}\begin{split}
	&\begin{pmatrix}\zeta_0^{(1)}\\ \zeta_1^{(1)}\\ \zeta_2^{(1)}\end{pmatrix} = Z_{10}\begin{pmatrix}\zeta_0^{(0)}\\ \zeta_1^{(0)}\\ \zeta_2^{(0)}\end{pmatrix},\quad
	Z_{10} = \begin{pmatrix}\xi_0^2&0&0\\ -\eta\xi_0\xi_1&-\xi_1^2&-\xi_1\xi_2\\ 0&0&\xi_0\xi_1\end{pmatrix}, \\
	&\begin{pmatrix}\zeta_0^{(2)}\\ \zeta_1^{(2)}\\ \zeta_2^{(2)}\end{pmatrix} = Z_{20}\begin{pmatrix}\zeta_0^{(0)}\\ \zeta_1^{(0)}\\ \zeta_2^{(0)}\end{pmatrix},\quad
	Z_{20} = \begin{pmatrix}\xi_0^2&0&0\\ 0&\xi_0\xi_2&0\\ -\eta\xi_0\xi_2&-\xi_1\xi_2&-\xi_2^2\end{pmatrix}
\end{split}\end{equation}
on $W_0\cup W_1\cup W_2$.
We set a manifold $\Sigma_\eta$ as the quotient space of $W_0\cup W_1\cup W_2$ by relation \eqref{eq:rel_hom}.
Note that the manifold $\Sigma_\eta$ can be regarded as a $\mathbb{P}^2$ bundle over $\mathbb{P}^2$.
We can take affine coordinates of $W_{ij}$ as follows.
\begin{equation}\label{eq:hom_affine}\begin{split}
	&W_{00}: (q_1,q_2,p_1,p_2) = \left(\frac{\xi_1}{\xi_0},\frac{\xi_2}{\xi_0},\frac{\zeta_1^{(0)}}{\zeta_0^{(0)}},\frac{\zeta_2^{(0)}}{\zeta_0^{(0)}}\right).\\
	&W_{01}: \left(q_1,q_2,p_1^{(01)},p_2^{(01)}\right) = \left(\frac{\xi_1}{\xi_0},\frac{\xi_2}{\xi_0},\frac{\zeta_0^{(0)}}{\zeta_1^{(0)}},\frac{\zeta_2^{(0)}}{\zeta_1^{(0)}}\right).\\
	&W_{02}: \left(q_1,q_2,p_1^{(02)},p_2^{(02)}\right) = \left(\frac{\xi_1}{\xi_0},\frac{\xi_2}{\xi_0},\frac{\zeta_1^{(0)}}{\zeta_2^{(0)}},\frac{\zeta_0^{(0)}}{\zeta_2^{(0)}}\right).\\
	&W_{10}: \left(q_1^{(1)},q_2^{(1)},p_1^{(1)},p_2^{(1)}\right) = \left(\frac{\xi_0}{\xi_1},\frac{\xi_2}{\xi_1},\frac{\zeta_1^{(1)}}{\zeta_0^{(1)}},\frac{\zeta_2^{(1)}}{\zeta_0^{(1)}}\right).\\
	&W_{11}: \left(q_1^{(1)},q_2^{(1)},p_1^{(11)},p_2^{(11)}\right) = \left(\frac{\xi_0}{\xi_1},\frac{\xi_2}{\xi_1},\frac{\zeta_0^{(1)}}{\zeta_1^{(1)}},\frac{\zeta_2^{(1)}}{\zeta_1^{(1)}}\right).\\
	&W_{12}: \left(q_1^{(1)},q_2^{(1)},p_1^{(12)},p_2^{(12)}\right) = \left(\frac{\xi_0}{\xi_1},\frac{\xi_2}{\xi_1},\frac{\zeta_1^{(1)}}{\zeta_2^{(1)}},\frac{\zeta_0^{(1)}}{\zeta_2^{(1)}}\right).\\
	&W_{20}: \left(q_1^{(2)},q_2^{(2)},p_1^{(2)},p_2^{(2)}\right) = \left(\frac{\xi_1}{\xi_2},\frac{\xi_0}{\xi_2},\frac{\zeta_1^{(2)}}{\zeta_0^{(2)}},\frac{\zeta_2^{(2)}}{\zeta_0^{(2)}}\right).\\
	&W_{21}: \left(q_1^{(2)},q_2^{(2)},p_1^{(21)},p_2^{(21)}\right) = \left(\frac{\xi_1}{\xi_2},\frac{\xi_0}{\xi_2},\frac{\zeta_0^{(2)}}{\zeta_1^{(2)}},\frac{\zeta_2^{(2)}}{\zeta_1^{(2)}}\right).\\
	&W_{22}: \left(q_1^{(2)},q_2^{(2)},p_1^{(22)},p_2^{(22)}\right) = \left(\frac{\xi_1}{\xi_2},\frac{\xi_0}{\xi_2},\frac{\zeta_1^{(2)}}{\zeta_2^{(2)}},\frac{\zeta_0^{(2)}}{\zeta_2^{(2)}}\right).
\end{split}\end{equation}
It implies relations
\begin{equation}\label{eq:rel_p}\begin{split}
	&\left(p_1^{(01)},p_2^{(01)}\right) = \left(\frac{1}{p_1},\frac{p_2}{p_1}\right),\quad
	\left(p_1^{(02)},p_2^{(02)}\right) = \left(\frac{p_1}{p_2},\frac{1}{p_2}\right),\\
	&\left(p_1^{(11)},p_2^{(11)}\right) = \left(\frac{1}{p_1^{(1)}},\frac{p_2^{(1)}}{p_1^{(1)}}\right),\quad
	\left(p_1^{(12)},p_2^{(12)}\right) = \left(\frac{p_1^{(1)}}{p_2^{(1)}},\frac{1}{p_2^{(1)}}\right),\\
	&\left(p_1^{(21)},p_2^{(21)}\right) = \left(\frac{1}{p_1^{(2)}},\frac{p_2^{(2)}}{p_1^{(2)}}\right),\quad
	\left(p_1^{(22)},p_2^{(22)}\right) = \left(\frac{p_1^{(2)}}{p_2^{(2)}},\frac{1}{p_2^{(2)}}\right).
\end{split}\end{equation}

\begin{rem}
The manifold $\Sigma_\eta$ was proposed by Kimura in \cite{Kim} as a higher dimensional extension of the Hirzebruch surface $\Sigma_2$.
He called $\Sigma_\eta$ a Hirzebruch manifold.
Although we see that the manifold $\Sigma_\eta$ is isomorphic to $\mathbb{P}^2\times\mathbb{P}^2$ in \cite{Kim,SuzM}, it may be incorrect.
Recall that the Hirzebruch surface $\Sigma_2$ is obtained from $\mathbb{P}^1\times\mathbb{P}^1$ through certain blowing up and blowing down procedures.
The manifold $\Sigma_\eta$ can be obtained from $\mathbb{P}^2\times\mathbb{P}^2$ in a similar manner.
We do not state its detail here.
\end{rem}

We extend system \eqref{eq:FST} to that on $\Sigma_{\eta}\times B$ (or resp. $\Sigma_{\eta}^0\times B$) and denote it by $\mathcal{H}$ (or resp. $\mathcal{H}^0$).

\begin{prop}\label{prop:Wi0_ham}
The system $\mathcal{H}^0$ is described as the polynomial Hamiltonian system on any affine chart $W_{i0}$.
\end{prop}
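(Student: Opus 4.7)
The plan is to handle the three affine charts separately. On $W_{00}$ the assertion is immediate: the affine coordinates $(q_1,q_2,p_1,p_2)$ of $W_{00}$ coincide with the original variables of \eqref{eq:FST}, whose Hamiltonian $H$ is polynomial by construction, so $\mathcal{H}^0|_{W_{00}}$ is already of the desired form. The real content is the case $i=1,2$, where I would verify (i) that the coordinate change \eqref{eq:rel_Wi0} is canonical, and (ii) that the pullback of $H$ through the inverse of \eqref{eq:rel_Wi0} is polynomial in the new variables.

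For (i), I would differentiate the right-hand sides of \eqref{eq:rel_Wi0} directly and check that
\[
dp_1^{(i)}\wedge dq_1^{(i)}+dp_2^{(i)}\wedge dq_2^{(i)}=dp_1\wedge dq_1+dp_2\wedge dq_2.
\]
The off-diagonal cross terms (those involving $dq_1\wedge dq_2$ and $dp_j\wedge dq_k$ with $j\neq k$) cancel in pairs. Since \eqref{eq:rel_Wi0} has no $t$-dependence, no gauge correction is needed, and the Hamiltonian governing $\mathcal{H}^0$ on $W_{i0}$ is literally $H$ expressed through \eqref{eq:rel_Wi0}.

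For (ii), inverting the $W_{10}$ case of \eqref{eq:rel_Wi0} gives
\[
q_1=\frac{1}{q_1^{(1)}},\quad q_2=\frac{q_2^{(1)}}{q_1^{(1)}},\quad p_2=q_1^{(1)}p_2^{(1)},\quad p_1=-\bigl(q_1^{(1)}\bigr)^2p_1^{(1)}-q_1^{(1)}q_2^{(1)}p_2^{(1)}-q_1^{(1)}\eta,
\]
so that $p_1,p_2,\ q_1p_1=-q_1^{(1)}p_1^{(1)}-q_2^{(1)}p_2^{(1)}-\eta$, and $q_2p_2=q_2^{(1)}p_2^{(1)}$ are all polynomial, while $q_1$ and $q_2$ have simple poles along the divisor $q_1^{(1)}=0$. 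Expanding each of the three summands of $H$ (the two copies of $H_{\mathrm{VI}}$ and the interaction term) in the new variables, each piece individually is singular along $q_1^{(1)}=0$; what must be checked is that the polar parts cancel in the sum. The case $W_{20}$ is treated symmetrically, with the roles of the two canonical pairs exchanged.

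The main obstacle is this cancellation of polar parts. It is a concrete algebraic identity whose validity is precisely what pins down the specific coefficients of the interaction term and the parameter shifts inside the two $H_{\mathrm{VI}}$ summands of \eqref{eq:FST}. I would organise the computation by collecting monomials by total degree in $1/q_1^{(1)}$, verify that the coefficients of negative powers vanish, and then read off the remaining polynomial as the Hamiltonian on $W_{10}$. Beyond this bookkeeping, no further ingredient is required.
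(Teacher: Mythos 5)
Your strategy is a direct verification, and it is genuinely different from the paper's argument. The paper observes that the chart transitions \eqref{eq:rel_Wi0} are not new transformations at all: one has
\[
\left(q_1^{(1)},q_2^{(1)},p_1^{(1)},p_2^{(1)}\right)=\Psi_1\pi_2^{-1}\left((q_1,q_2,p_1,p_2)\right),\qquad
\left(q_1^{(2)},q_2^{(2)},p_1^{(2)},p_2^{(2)}\right)=\Psi_2\pi_2\left((q_1,q_2,p_1,p_2)\right),
\]
where $\Psi_1,\Psi_2$ are elementary rescalings/relabelings such as $(q_1,q_2,p_1,p_2)\mapsto(q_2,tq_1,p_2,p_1/t)$. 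Since $\pi_2$ is a B\"acklund transformation leaving system \eqref{eq:FST} invariant (only permuting the parameters), and $\Psi_i$ manifestly preserves polynomiality and canonicity, the system on $W_{10}\times B$ and $W_{20}\times B$ is automatically a polynomial Hamiltonian system --- no pole cancellation ever has to be checked. What the symmetry argument buys is precisely the step you identify as ``the main obstacle'': the cancellation of the polar parts along $q_1^{(1)}=0$ is not an accidental algebraic identity to be verified monomial by monomial, but a structural consequence of the $\pi_2$-invariance already recorded in Section \ref{sec:intro}.

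As written, your proposal does have a gap at exactly that point: for $W_{10}$ and $W_{20}$ the entire content of the proposition is the vanishing of the negative powers of $q_1^{(1)}$ (resp.\ $q_2^{(2)}$) in the transformed Hamiltonian, and you assert this cancellation and describe how you would organize the bookkeeping without actually carrying it out or giving any reason it must occur. The computation would in fact succeed, so the route is viable, but a complete proof along your lines must either display that calculation or supply an a priori reason for the cancellation --- and the cleanest such reason is the factorization through $\pi_2$ used in the paper. Your part (i), the verification that \eqref{eq:rel_Wi0} is symplectic (indeed $\sum_i p_i^{(1)}dq_i^{(1)}=\sum_i p_i\,dq_i+\eta\,d\log q_1$, so the two-forms agree), and your observation that $W_{00}$ is trivial, are both correct and consistent with the paper.
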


\begin{proof}
Let $\Psi_1,\Psi_2$ be transformations which act on the initial coordinates $q_1,q_2,p_1,p_2$ as
\[
	\Psi_1((q_1,q_2,p_1,p_2)) = \left(q_2,tq_1,p_2,\frac{p_1}{t}\right),\quad
	\Psi_2((q_1,q_2,p_1,p_2)) = \left(tq_2,tq_1,\frac{p_2}{t},\frac{p_1}{t}\right).
\]
Then we obtain
\[
	\left(q_1^{(1)},q_2^{(1)},p_1^{(1)},p_2^{(1)}\right) = \Psi_1\pi_2^{-1}((q_1,q_2,p_1,p_2)),\quad
	\left(q_1^{(2)},q_2^{(2)},p_1^{(2)},p_2^{(2)}\right) = \Psi_2\pi_2((q_1,q_2,p_1,p_2)).
\]
Since the transformation $\pi_2$ provides the symmetry of system \eqref{eq:FST}, the system on $W_{10}\times B$ and $W_{20}\times B$ are both polynomial Hamiltonian systems.
Note that both relations in \eqref{eq:rel_q} are symplectic transformations.
\end{proof}

\begin{prop}\label{prop:sol00}
If we assume that
\begin{equation}\label{eq:prop_sol00}
	\alpha_1 - \eta \neq 0,\quad
	\alpha_3 - \eta \neq 0,\quad
	\alpha_1 + \alpha_3 - \eta \neq 0,\quad
	\alpha_1 + \alpha_2 + \alpha_3 - \eta \neq 0,
\end{equation}
any solution to the system $\mathcal{H}^0$ passes through $W_{00}\times B$.
\end{prop}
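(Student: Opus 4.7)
The plan is to reduce the assertion to a local analysis on each of the two boundary divisors $D_1=\{q_1^{(1)}=0\}\subset W_{10}$ and $D_2=\{q_2^{(2)}=0\}\subset W_{20}$. By the construction of $\Sigma_\eta^0$ as a quotient of $W_{00}\cup W_{10}\cup W_{20}$, one has $\Sigma_\eta^0\setminus W_{00}\subset D_1\cup D_2$, so a solution that never enters $W_{00}\times B$ would have to lie entirely in $(D_1\cup D_2)\times B$ for every $t\in B$; this is the possibility that must be excluded under \eqref{eq:prop_sol00}.

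First I would write the polynomial Hamiltonians $H^{(1)},H^{(2)}$ on $W_{10},W_{20}$ guaranteed by Proposition \ref{prop:Wi0_ham}, exploiting the explicit factorizations $\Psi_1\pi_2^{-1}$ and $\Psi_2\pi_2$ used in its proof, and expand each in its transversal coordinate $q_i^{(i)}$ measuring the distance to $D_i$. From this expansion I would read off $\partial H^{(i)}/\partial p_i^{(i)}|_{q_i^{(i)}=0}$: wherever this is not identically zero, $D_i$ is not an invariant hypersurface, and a solution can be tangent to $D_i$ only along the lower dimensional locus where it vanishes.

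Next I would iterate this argument on the tangency locus: solving the vanishing equation for the unknown remaining momentum and substituting back into the Hamiltonian equations for $\dot p_i^{(i)}$ reduces the obstruction to a finite chain of algebraic conditions on the remaining coordinates. The four expressions in \eqref{eq:prop_sol00} are precisely the parameter combinations that must be nonzero in order for this chain to close into a genuine invariant curve on $D_i$; they play the role of the exponent differences governing the Riccati-type invariant loci in each of the two $H_{\rm VI}$ factors appearing in \eqref{eq:FST}. When each of them is nonzero, no such invariant curve exists, and consequently no solution can remain for all $t$ inside $D_1\cup D_2$, so every solution of $\mathcal{H}^0$ must visit $W_{00}\times B$.

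The main obstacle is the explicit rewriting of $H$ in the two boundary charts together with the algebraic elimination that follows. The Hamiltonian $H$ in \eqref{eq:FST} contains several cubic cross terms in $q_1,q_2,p_1,p_2$, and the symplectic transformations in \eqref{eq:rel_Wi0} mix all four variables; correctly tracking the leading behaviour in the transversal coordinate and pinning down the precise linear combinations $\alpha_1-\eta$, $\alpha_3-\eta$, $\alpha_1+\alpha_3-\eta$, $\alpha_1+\alpha_2+\alpha_3-\eta$ in \eqref{eq:prop_sol00} will demand careful bookkeeping rather than any conceptually new idea.
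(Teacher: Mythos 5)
Your proposal is correct and follows essentially the same route as the paper: the authors also reduce the claim to excluding solutions confined to $\{q_1^{(1)}\equiv 0\}$ or $\{q_2^{(2)}\equiv 0\}$, read off from the equation for $q_1^{(1)}$ on $W_{10}$ a quantity $Q_1=(q_2^{(1)})^2p_2^{(1)}-q_2^{(1)}p_2^{(1)}+\alpha_3 q_2^{(1)}+\alpha_1-\eta$ that must vanish identically for the divisor to be invariant, and then differentiate $Q_1$ along the flow to reach a contradiction with \eqref{eq:prop_sol00} --- exactly your ``iterate on the tangency locus'' step. The only work left in your plan is the explicit computation you already flag, which is also the substance of the paper's proof.
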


\begin{proof}
We assume that there exists a solution to the system $\mathcal{H}^0$ satisfying the condition $\xi_0\equiv0$, namely $q_1^{(1)}\equiv0$ or $q_2^{(2)}\equiv0$.

We first consider the condition $q_1^{(1)}\equiv0$.
Then the equation for $q_1^{(1)}$ in $\mathcal{H}^0$ is described as
\[
	t(t-1)dq_1^{(1)} + \left\{\left(q_2^{(1)}\right)^2p_2^{(1)}-q_2^{(1)}p_2^{(1)}+\alpha_3q_2^{(1)}+\alpha_1-\eta\right\}dt = 0.
\]
We set
\[
	Q_1 = \left(q_2^{(1)}\right)^2p_2^{(1)} - q_2^{(1)}p_2^{(1)} + \alpha_3q_2^{(1)} + \alpha_1 - \eta.
\]
Then we obtain $Q_1\equiv0$, which implies $q_2^{(1)}\not\equiv0,1$ due to assumption \eqref{eq:prop_sol00}.
Moreover, the equation for $Q_1$ in $\mathcal{H}^0$ is described as
\[
	t\left(q_2^{(1)}-1\right)dQ_1 + (\alpha_1+\alpha_3-\eta)(\alpha_1+\alpha_2+\alpha_3-\eta)q_2^{(1)}dt = 0.
\]
The equality $(\alpha_1+\alpha_3-\eta)(\alpha_1+\alpha_2+\alpha_3-\eta)q_2^{(1)}\equiv0$ contradicts with $q_2^{(1)}\not\equiv0$ and \eqref{eq:prop_sol00}.
Therefore there does not exist any solution satisfying $q_1^{(1)}\equiv0$.

We can prove for $q_2^{(2)}\equiv0$ in a similar manner.
The second condition of \eqref{eq:prop_sol00} is required in this case.
\end{proof}

\begin{rem}
Under the condition $\alpha_1-\eta=0$, the system $\mathcal{H}^0$ on $W_{10}\times B$ admits a particular solution such that $q_1^{(1)}=q_2^{(1)}=0$.
Then the equations for $p_1^{(1)},p_2^{(1)}$ are described as 
\begin{align*}
	&t(1-t)\frac{dp_1^{(1)}}{dt} = \left(p_1^{(1)}\right)^2 + \{-(\alpha_0+\alpha_1+\alpha_5)t+\alpha_0\}p_1^{(1)} + \alpha_1\alpha_5t + \left(p_1^{(1)}-\alpha_5\right)tp_2^{(1)},\\
	&t(1-t)\frac{dp_2^{(1)}}{dt} = t\left(p_2^{(1)}\right)^2 + \{\alpha_2t-(\alpha_1+\alpha_2+\alpha_3)\}p_2^{(1)} + \alpha_1\alpha_3 + \left(p_2^{(1)}-\alpha_3\right)p_1^{(1)},
\end{align*}
whose solutions are given as ratios of the generalized hypergeometric series ${}_3F_2$; see \cite{SuzT1} for its detail.
In a similar manner, we can derive particular solutions for $\alpha_3-\eta=0$, $\alpha_1+\alpha_3-\eta=0$ and $\alpha_1+\alpha_2+\alpha_3-\eta=0$.
\end{rem}

\begin{prop}\label{prop:sing}
The system $\mathcal{H}$ has pole singularities on $D^{0}=\left(\Sigma_\eta\setminus\Sigma_\eta^0\right)\times B$.
\end{prop}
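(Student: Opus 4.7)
The divisor $\Sigma_\eta\setminus\Sigma_\eta^0$ is, in the atlas of Section \ref{sec:compact}, the union over $i=0,1,2$ of $W_i\setminus W_{i0}$; by the relations \eqref{eq:rel_p}, each such piece is covered by the loci $\{p_1^{(i1)}=0\}\subset W_{i1}$ and $\{p_2^{(i2)}=0\}\subset W_{i2}$. By Proposition \ref{prop:Wi0_ham} the system $\mathcal H^0$ is a polynomial Hamiltonian system on each $W_{i0}$, so the plan is to pull the equations of motion along the inversion $p_k^{(ij)}=1/p_k^{(i)}$ from $W_{i0}$ to $W_{ij}$ (with $j=1,2$) and read off the pole along the added divisor $\{p_k^{(ij)}=0\}$.

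The representative computation is in $W_{01}$. Expanding the coupling term of \eqref{eq:FST}, the $p$-quadratic part of $H$ is
\[
	H_2=\frac{q_1(q_1-1)(q_1-t)\,p_1^2+q_2(q_2-1)(q_2-t)\,p_2^2+(q_1-1)(q_2-t)(q_1+q_2)\,p_1p_2}{t(t-1)},
\]
so $\partial H/\partial p_1$ is linear in $(p_1,p_2)$ modulo terms independent of $p$. Substituting $p_1=1/p_1^{(01)}$ and $p_2=p_2^{(01)}/p_1^{(01)}$ into $dq_1/dt=\partial H/\partial p_1$ yields
\[
	\frac{dq_1}{dt}=\frac{(q_1-1)\bigl[2q_1(q_1-t)+(q_2-t)(q_1+q_2)\,p_2^{(01)}\bigr]}{t(t-1)\,p_1^{(01)}}+R_{01},
\]
where $R_{01}$ is holomorphic at $p_1^{(01)}=0$. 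The polar numerator is a nonzero polynomial on $W_{01}$, so $\mathcal H$ carries a first-order pole on a dense open subset of $\{p_1^{(01)}=0\}$. The symmetric substitution $p_2=1/p_2^{(02)},\ p_1=p_1^{(02)}/p_2^{(02)}$ in $dq_2/dt=\partial H/\partial p_2$ produces the analogous first-order pole of $\mathcal H$ along $\{p_2^{(02)}=0\}\subset W_{02}$.

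The remaining divisors $W_{ij}\setminus W_{i0}$ for $i\in\{1,2\}$, $j\in\{1,2\}$ are handled identically: Proposition \ref{prop:Wi0_ham} again supplies a polynomial Hamiltonian on each $W_{i0}$, and the transfer maps $\Psi_1\pi_2^{-1}$ and $\Psi_2\pi_2$ used there are linear in $(p_1,p_2)$ with $q$-dependent coefficients, so conjugation sends the non-degenerate quadratic form $H_2$ to a non-trivial $p^{(i)}$-quadratic form on $W_{i0}$; the same inversion argument then delivers a simple pole along $\{p_k^{(ik)}=0\}$ in each chart. The only real obstacle is exhibiting a nonzero residue in each of these four extra charts, which amounts to locating one surviving quadratic-in-$p^{(i)}$ coefficient of the pulled-back Hamiltonian—an unpleasant but purely algebraic check, reducible to the non-degeneracy of the symmetric matrix of $H_2$ under an invertible $q$-dependent change of variables.
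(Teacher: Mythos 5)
Your proposal is correct and follows essentially the same route as the paper: both reduce to Proposition \ref{prop:Wi0_ham} for holomorphy on the $W_{i0}$ charts and then exhibit a simple pole of the vector field along $\{\zeta_0^{(i)}=0\}$ in the remaining charts, your polar numerator $(q_1-1)\bigl[2q_1(q_1-t)+(q_2-t)(q_1+q_2)p_2^{(01)}\bigr]$ being exactly the paper's $F_1^{(01)}\big|_{p_1^{(01)}=0}$. Your observation that the residue is governed by the $p$-quadratic part $H_2$ of the Hamiltonian (so that ``not divisible by $p_1^{(01)}$'' follows from non-degeneracy of $H_2$ under the affine, $q$-dependent chart transitions) is a nice conceptual packaging of the paper's direct computation, but not a different proof.
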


\begin{proof}
We first investigate the singularities of the system $\mathcal{H}$ on $W_0\times B$.
The system $\mathcal{H}$ on $W_{01}\times B$ is described as
\begin{equation}\label{eq:W01_1}\begin{split}
	&t(t-1)p_1^{(01)}dq_1- F_1^{(01)}dt = 0,\\
	&t(t-1)p_1^{(01)}dq_2- F_2^{(01)}dt = 0,\\
	&t(t-1)dp_1^{(01)} - G_1^{(01)}dt = 0,\\
	&t(t-1)p_1^{(01)}dp_2^{(01)} - G_2^{(01)}dt = 0,
\end{split}\end{equation}
where
\begin{align*}
	&F_1^{(01)} = \left[(\alpha_1+\eta)q_1^2 + \alpha_3q_1q_2 + \{(\alpha_0+\alpha_5-\eta)t-(\alpha_0+\alpha_1+\eta)\}q_1 - \alpha_3q_2 - (\alpha_5-\eta)t\right]p_1^{(01)}\\
	&\qquad\qquad +(q_1-1)(q_1+q_2)(q_2-t)p_2^{(01)}+2q_1(q_1-1)(q_1-t),\\
	&F_2^{(01)} = \left[\alpha_1q_1q_2 + (\alpha_3+\eta)q_2^2 - \alpha_1tq_1 - \{(\alpha_3+\alpha_4+\eta-1)t-(\alpha_4+\alpha_5-\eta-1)\}q_2 - (\alpha_5-\eta)t\right]p_1^{(01)}\\
	&\qquad\qquad + 2q_2(q_2-1)(q_2-t)p_2^{(01)}+(q_1-1)(q_1+q_2)(q_2-t),\\
	&G_1^{(01)} = \alpha_1\eta\left(p_1^{(01)}\right)^2 + \{2(\alpha_1+\eta)q_1+(\alpha_1+\alpha_3)q_2+(\alpha_0-\alpha_1+\alpha_5-\eta)t-(\alpha_0+\alpha_1+\eta)\}p_1^{(01)}\\
	&\qquad\qquad +(2q_1+q_2-1)(q_2-t)p_2^{(01)} + 3q_1^2 - 2(t+1)q_1 + t,\\
	&G_2^{(01)} = \eta\left(\alpha_1p_2^{(01)}-\alpha_3\right)\left(p_1^{(01)}\right)^2\\
	&\qquad\qquad + \bigg[\alpha_1(q_2-t)\left(p_2^{(01)}\right)^2 + \{(\alpha_1+2\eta)q_1-(\alpha_3+2\eta)q_2-\alpha_1t+\alpha_2+\alpha_3\}p_2^{(01)}-\alpha_3(q_1-1)\bigg]p_1^{(01)}\\
	&\qquad\qquad + \left\{2q_1q_2-2q_2^2-2tq_1+(t+1)q_2\right\}\left(p_2^{(01)}\right)^2+\left\{2q_1q_2+2q_2^2-(t+1)q_1+2q_2\right\}p_2^{(01)}.
\end{align*}
Let $A$ be a $4\times5$ matrix defined by 
\[
	A = \begin{pmatrix}t(t-1)p_1^{(01)}&0&0&0&-F_1^{(01)}\\ 0&t(t-1)p_1^{(01)}&0&0&-F_2^{(01)}\\ 0&0&t(t-1)&0&-G_1^{(01)}\\ 0&0&0&t(t-1)p_1^{(01)}&-G_2^{(01)}\end{pmatrix}.
\]
Then the singularity of system \eqref{eq:W01_1} is the zero set of minor determinants of $A$ of fourth order. 
It is given by
\[
	\left\{\left(q_1,q_2,p_1^{(01)},p_2^{(01)}\right)\in\mathbb{C}^4\bigm|p_1^{(01)}=0\right\}\times B.
\]
Similarly, the singularity of the system $\mathcal{H}$ on $W_{02}\times B$ is
\[
	\left\{\left(q_1,q_2,p_1^{(02)},p_2^{(02)}\right)\in\mathbb{C}^4\bigm|p_2^{(02)}=0\right\}\times B.
\]
By using $p_1^{(01)}=\frac{\zeta_0^{(0)}}{\zeta_1^{(0)}}$ and $p_2^{(02)}=\frac{\zeta_0^{(0)}}{\zeta_2^{(0)}}$, we can rewrite the singularity on $W_0\times B$ in terms of the homogeneous coordinate \eqref{eq:hom_affine} as
\[
	\left\{\left(\xi,\zeta^{(0)}\right)\in\mathbb{P}^2\times\mathbb{P}^2\bigm|\xi_0\neq0,\ \zeta_0^{(0)}=0\right\}\times B.
\]
Note that the system $\mathcal{H}$ coincides with system \eqref{eq:FST} and has no singularity on $W_{00}\times B$.

We can obtain the singularities of the system $\mathcal{H}$ on $W_1\times B$ and $W_2\times B$ as
\[
	\left\{\left(\xi,\zeta^{(1)}\right)\in\mathbb{P}^2\times\mathbb{P}^2\bigm|\xi_1\neq0,\ \zeta_0^{(1)}=0\right\}\times B
\]
and
\[
	\left\{\left(\xi,\zeta^{(2)}\right)\in\mathbb{P}^2\times\mathbb{P}^2\bigm|\xi_2\neq0,\ \zeta_0^{(2)}=0\right\}\times B
\]
in a similar manner.
Hence the system $\mathcal{H}$ has pole singularities on $\left(\Sigma_\eta\setminus\Sigma_\eta^0\right)\times B$.
\end{proof}

\section{Accessible singularities}\label{sec:as}

In the previous section, we have shown that the system $\mathcal{H}$ has the singularities only on $W_{ij}\times B\ (i=0,1,2,\ j=1,2)$.
There exist accessible singularities on these $W_{ij}\times B$ where leaves corresponding to solutions to $\mathcal{H}$ passing through are transversal leaves.
For example, the accessible singularities on $W_{01}\times B$ are given by the condition $p_1^{(01)}=F_1^{(01)}=F_2^{(01)}=G_2^{(01)}=0$.
It follows from the form of system \eqref{eq:W01_1}.
In this section, we list all of the accessible singularities of $\mathcal{H}$.

We first give the following lemma.

\begin{lem}\label{lem:as_list}
All of the accessible singularities of the system $\mathcal{H}$ are given by
\begin{align*}
	&C_1^{(01)} = \left\{\left(q_1,q_2,p_1^{(01)},p_2^{(01)},t\right)\bigm|q_1-q_2=p_1^{(01)}=p_2^{(01)}+1=0\right\},\\
	&C_2^{(01)} = \left\{\left(q_1,q_2,p_1^{(01)},p_2^{(01)},t\right)\bigm|q_1-1=p_1^{(01)}=p_2^{(01)}=0\right\} ,\\
	&C_3^{(01)} = \left\{\left(q_1,q_2,p_1^{(01)},p_2^{(01)},t\right)\bigm|q_1=q_2=p_1^{(01)}=0\right\},\\
	&P_1^{(01)} = \left\{\left(q_1,q_2,p_1^{(01)},p_2^{(01)},t\right)\bigm|q_1-t=q_2-t=p_1^{(01)}=p_2^{(01)}=0\right\},\\
	&P_2^{(01)} = \left\{\left(q_1,q_2,p_1^{(01)},p_2^{(01)},t\right)\bigm|q_1-t=q_2+t=p_1^{(01)}=p_2^{(01)}=0\right\},\\
	&P_3^{(01)} = \left\{\left(q_1,q_2,p_1^{(01)},p_2^{(01)},t\right)\bigm|q_1=q_2-t=p_1^{(01)}=p_2^{(01)}=0\right\},\\
	&P_4^{(01)} = \left\{\left(q_1,q_2,p_1^{(01)},p_2^{(01)},t\right)\bigm|q_1=q_2-t=p _1^{(01)}=(t-1)p_2^{(01)}-2=0\right\},\\
	&P_5^{(01)} = \left\{\left(q_1,q_2,p_1^{(01)},p_2^{(01)},t\right)\bigm|q_1-1=q_2=p_1^{(01)}=2tp_2^{(01)}+t-1=0\right\},\\
	&P_6^{(01)} = \left\{\left(q_1,q_2,p_1^{(01)},p_2^{(01)},t\right)\bigm|q_1-1=q_2-t=p_1^{(01)}=tp_2^{(01)}+1=0\right\},\\
	&P_7^{(01)} = \left\{\left(q_1,q_2,p_1^{(01)},p_2^{(01)},t\right)\bigm|q_1+1=q_2+t=p_1^{(01)}=tp_2^{(01)}+1=0\right\}
\end{align*}
on $W_{01}\times B$,
\begin{align*}
	&C_1^{(02)} = \left\{\left(q_1,q_2,p_1^{(02)},p_2^{(02)},t\right)\bigm|q_1-q_2=p_1^{(02)}+1=p_2^{(02)}=0\right\},\\
	&C_2^{(02)} = \left\{\left(q_1,q_2,p_1^{(02)},p_2^{(02)},t\right)\bigm|q_2-t=p_1^{(02)}=p_2^{(02)}=0\right\},\\
	&C_3^{(02)} = \left\{\left(q_1,q_2,p_1^{(02)},p_2^{(02)},t\right)\bigm|q_1=q_2=p_2^{(02)}=0\right\},\\
	&P_1^{(02)} = \left\{\left(q_1,q_2,p_1^{(02)},p_2^{(02)},t\right)\bigm|q_1-1=q_2-1=p_1^{(02)}=p_2^{(02)}=0\right\},\\
	&P_2^{(02)} = \left\{\left(q_1,q_2,p_1^{(02)},p_2^{(02)},t\right)\bigm|q_1+1=q_2-1=p_1^{(02)}=p_2^{(02)}=0\right\},\\
	&P_3^{(02)} = \left\{\left(q_1,q_2,p_1^{(02)},p_2^{(02)},t\right)\bigm|q_1-1=q_2=p_1^{(02)}=p_2^{(02)}=0\right\},\\
	&P_4^{(02)} = \left\{\left(q_1,q_2,p_1^{(02)},p_2^{(02)},t\right)\bigm|q_1=q_2-t=2p_1^{(02)}-t+1=p_2^{(02)}=0\right\},\\
	&P_5^{(02)} = \left\{\left(q_1,q_2,p_1^{(02)},p_2^{(02)},t\right)\bigm|q_1-1=q_2=(t-1)p_1^{(02)}+2t=p_2^{(02)}=0\right\},\\
	&P_6^{(02)} = \left\{\left(q_1,q_2,p_1^{(02)},p_2^{(02)},t\right)\bigm|q_1-1=q_2-t=p_1^{(02)}+t=p_2^{(02)}=0\right\},\\
	&P_7^{(02)} = \left\{\left(q_1,q_2,p_1^{(02)},p_2^{(02)},t\right)\bigm|q_1+1=q_2+t=p_1^{(02)}+t=p_2^{(02)}=0\right\}
\end{align*}
on $W_{02}\times B$,
\begin{align*}
	&C_1^{(11)} = \left\{\left(q_1^{(1)},q_2^{(1)},p_1^{(11)},p_2^{(11)},t\right)\bigm|tq_1^{(1)}-q_2^{(1)}=p_1^{(11)}=tp_2^{(11)}+1=0\right\},\\
	&C_2^{(11)} = \left\{\left(q_1^{(1)},q_2^{(1)},p_1^{(11)},p_2^{(11)},t\right)\bigm|q_1^{(1)}-1=p_1^{(11)}=p_2^{(11)}=0\right\},\\
	&C_3^{(11)} = \left\{\left(q_1^{(1)},q_2^{(1)},p_1^{(11)},p_2^{(11)},t\right)\bigm|q_1^{(1)}=q_2^{(1)}=p_1^{(11)}=0\right\},\\
	&P_1^{(11)} = \left\{\left(q_1^{(1)},q_2^{(1)},p_1^{(11)},p_2^{(11)},t\right)\bigm|tq_1^{(1)}-1=q_2^{(1)}-1=p_1^{(11)}=p_2^{(11)}=0\right\},\\
	&P_2^{(11)} = \left\{\left(q_1^{(1)},q_2^{(1)},p_1^{(11)},p_2^{(11)},t\right)\bigm|tq_1^{(1)}-1=q_2^{(1)}+1=p_1^{(11)}=p_2^{(11)}=0\right\},\\
	&P_3^{(11)} = \left\{\left(q_1^{(1)},q_2^{(1)},p_1^{(11)},p_2^{(11)},t\right)\bigm|q_1^{(1)}=q_2^{(1)}-1=p_1^{(11)}=p_2^{(11)}=0\right\},\\
	&P_4^{(11)} = \left\{\left(q_1^{(1)},q_2^{(1)},p_1^{(11)},p_2^{(11)},t\right)\bigm|q_1^{(1)}-1=q_2^{(1)}=p_1^{(11)}=2tp_2^{(11)}-t+1=0\right\},\\
	&P_5^{(11)} = \left\{\left(q_1^{(1)},q_2^{(1)},p_1^{(11)},p_2^{(11)},t\right)\bigm|q_1^{(1)}=q_2^{(1)}-1=p_1^{(11)}=(t-1)p_2^{(11)}+2=0\right\},\\
	&P_6^{(11)} = \left\{\left(q_1^{(1)},q_2^{(1)},p_1^{(11)},p_2^{(11)},t\right)\bigm|q_1^{(1)}-1=q_2^{(1)}-1=p_1^{(11)}=p_2^{(11)}+1=0\right\},\\
	&P_7^{(11)} = \left\{\left(q_1^{(1)},q_2^{(1)},p_1^{(11)},p_2^{(11)},t\right)\bigm|q_1^{(1)}+1=q_2^{(1)}+1=p_1^{(11)}=p_2^{(11)}+1=0\right\}
\end{align*}
on $W_{11}\times B$,
\begin{align*}
	&C_1^{(12)} = \left\{\left(q_1^{(1)},q_2^{(1)},p_1^{(12)},p_2^{(12)},t\right)\bigm|tq_1^{(1)}-q_2^{(1)}=p_1^{(12)}+t=p_2^{(12)}=0\right\},\\
	&C_2^{(12)} = \left\{\left(q_1^{(1)},q_2^{(1)},p_1^{(12)},p_2^{(12)},t\right)\bigm|q_2^{(1)}-1=p_1^{(12)}=p_2^{(12)}=0\right\},\\
	&C_3^{(12)} = \left\{\left(q_1^{(1)},q_2^{(1)},p_1^{(12)},p_2^{(12)},t\right)\bigm|q_1^{(1)}=q_2^{(1)}=p_2^{(12)}=0\right\},\\
	&P_1^{(12)} = \left\{\left(q_1^{(1)},q_2^{(1)},p_1^{(12)},p_2^{(12)},t\right)\bigm|q_1^{(1)}-1=q_2^{(1)}-t=p_1^{(12)}=p_2^{(12)}=0\right\},\\
	&P_2^{(12)} = \left\{\left(q_1^{(1)},q_2^{(1)},p_1^{(12)},p_2^{(12)},t\right)\bigm|q_1^{(1)}+1=q_2^{(1)}-t=p_1^{(12)}=p_2^{(12)}=0\right\},\\
	&P_3^{(12)} = \left\{\left(q_1^{(1)},q_2^{(1)},p_1^{(12)},p_2^{(12)},t\right)\bigm|q_1^{(1)}-1=q_2^{(1)}=p_1^{(12)}=p_2^{(12)}=0\right\},\\
	&P_4^{(12)} = \left\{\left(q_1^{(1)},q_2^{(1)},p_1^{(12)},p_2^{(12)},t\right)\bigm|q_1^{(1)}-1=q_2^{(1)}=(t-1)p_1^{(12)}-2t=p_2^{(12)}=0\right\},\\
	&P_5^{(12)} = \left\{\left(q_1^{(1)},q_2^{(1)},p_1^{(12)},p_2^{(12)},t\right)\bigm|q_1^{(1)}=q_2^{(1)}-1=2p_1^{(12)}+t-1=p_2^{(12)}=0\right\},\\
	&P_6^{(12)} = \left\{\left(q_1^{(1)},q_2^{(1)},p_1^{(12)},p_2^{(12)},t\right)\bigm|q_1^{(1)}-1=q_2^{(1)}-1=p_1^{(12)}+1=p_2^{(12)}=0\right\},\\ 
	&P_7^{(12)} = \left\{\left(q_1^{(1)},q_2^{(1)},p_1^{(12)},p_2^{(12)},t\right)\bigm|q_1^{(1)}+1=q_2^{(1)}+1=p_1^{(12)}+1=p_2^{(12)}=0\right\}
\end{align*}
on $W_{12}\times B$,
\begin{align*}
	&C_1^{(21)} = \left\{\left(q_1^{(2)},q_2^{(2)},p_1^{(21)},p_2^{(21)},t\right)\bigm|q_1^{(2)}-q_2^{(2)}=p_1^{(21)}=p_2^{(21)}+1=0\right\},\\
	&C_2^{(21)} = \left\{\left(q_1^{(2)},q_2^{(2)},p_1^{(21)},p_2^{(21)},t\right)\bigm|q_1^{(2)}-1=p_1^{(21)}=p_2^{(21)}=0\right\},\\
	&C_3^{(21)} = \left\{\left(q_1^{(2)},q_2^{(2)},p_1^{(21)},p_2^{(21)},t\right)\bigm|q_1^{(2)}=q_2^{(2)}=p_1^{(21)}=0\right\},\\
	&P_1^{(21)} = \left\{\left(q_1^{(2)},q_2^{(2)},p_1^{(21)},p_2^{(21)},t\right)\bigm|tq_1^{(2)}-1=tq_2^{(2)}-1=p_1^{(21)}=p_2^{(21)}=0\right\},\\
	&P_2^{(21)} = \left\{\left(q_1^{(2)},q_2^{(2)},p_1^{(21)},p_2^{(21)},t\right)\bigm|tq_1^{(2)}-1=tq_2^{(2)}+1=p_1^{(21)}=p_2^{(21)}=0\right\},\\
	&P_3^{(21)} = \left\{\left(q_1^{(2)},q_2^{(2)},p_1^{(21)},p_2^{(21)},t\right)\bigm|q_1^{(2)}=tq_2^{(2)}-1=p_1^{(21)}=p_2^{(21)}=0\right\},\\
	&P_4^{(21)} = \left\{\left(q_1^{(2)},q_2^{(2)},p_1^{(21)},p_2^{(21)},t\right)\bigm|q_1^{(2)}-1=q_2^{(2)}=p_1^{(21)}=2p_2^{(21)}-t+1=0\right\},\\
	&P_5^{(21)} = \left\{\left(q_1^{(2)},q_2^{(2)},p_1^{(21)},p_2^{(21)},t\right)\bigm|q_1^{(2)}=tq_2^{(2)}-1=p_1^{(21)}=(t-1)p_2^{(21)}+2t=0\right\},\\
	&P_6^{(21)} = \left\{\left(q_1^{(2)},q_2^{(2)},p_1^{(21)},p_2^{(21)},t\right)\bigm|q_1^{(2)}-1=tq_2^{(2)}-1=p_1^{(21)}=p_2^{(21)}+t=0\right\},\\
	&P_7^{(21)} = \left\{\left(q_1^{(2)},q_2^{(2)},p_1^{(21)},p_2^{(21)},t\right)\bigm|q_1^{(2)}+1=tq_2^{(2)}+1=p_1^{(21)}=p_2^{(21)}+t=0\right\}
\end{align*}
on $W_{21}\times B$ and
\begin{align*}
	&C_1^{(22)} = \left\{\left(q_1^{(2)},q_2^{(2)},p_1^{(22)},p_2^{(22)},t\right)\bigm|q_1^{(2)}-q_2^{(2)}=p_1^{(22)}+1=p_2^{(22)}=0\right\},\\
	&C_2^{(22)} = \left\{\left(q_1^{(2)},q_2^{(2)},p_1^{(22)},p_2^{(22)},t\right)\bigm|tq_2^{(2)}-1=p_1^{(22)}=p_2^{(22)}=0\right\},\\
	&C_3^{(22)} = \left\{\left(q_1^{(2)},q_2^{(2)},p_1^{(22)},p_2^{(22)},t\right)\bigm|q_1^{(2)}=q_2^{(2)}=p_2^{(22)}=0\right\},\\
	&P_1^{(22)} = \left\{\left(q_1^{(2)},q_2^{(2)},p_1^{(22)},p_2^{(22)},t\right)\bigm|q_1^{(2)}-1=q_2^{(2)}-1=p_1^{(22)}=p_2^{(22)}=0\right\},\\
	&P_2^{(22)} = \left\{\left(q_1^{(2)},q_2^{(2)},p_1^{(22)},p_2^{(22)},t\right)\bigm|q_1^{(2)}+1=q_2^{(2)}-1=p_1^{(22)}=p_2^{(22)}=0\right\},\\
	&P_3^{(22)} = \left\{\left(q_1^{(2)},q_2^{(2)},p_1^{(22)},p_2^{(22)},t\right)\bigm|q_1^{(2)}-1=q_2^{(2)}=p_1^{(22)}=p_2^{(22)}=0\right\},\\
	&P_4^{(22)} = \left\{\left(q_1^{(2)},q_2^{(2)},p_1^{(22)},p_2^{(22)},t\right)\bigm|q_1^{(2)}-1=q_2^{(2)}=(t-1)p_1^{(22)}-2=p_2^{(22)}=0\right\},\\
	&P_5^{(22)} = \left\{\left(q_1^{(2)},q_2^{(2)},p_1^{(22)},p_2^{(22)},t\right)\bigm|q_1^{(2)}=tq_2^{(2)}-1=2tp_1^{(22)}+t-1=p_2^{(22)}=0\right\},\\
	&P_6^{(22)} = \left\{\left(q_1^{(2)},q_2^{(2)},p_1^{(22)},p_2^{(22)},t\right)\bigm|q_1^{(2)}-1=tq_2^{(2)}-1=tp_1^{(22)}+1=p_2^{(22)}=0\right\},\\
	&P_7^{(22)} = \left\{\left(q_1^{(2)},q_2^{(2)},p_1^{(22)},p_2^{(22)},t\right)\bigm|q_1^{(2)}+1=tq_2^{(2)}+1=tp_1^{(22)}+1=p_2^{(22)}=0\right\}
\end{align*}
on $W_{22}\times B$.
\end{lem}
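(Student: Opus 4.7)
The plan is to rewrite the system $\mathcal{H}$ explicitly on each of the six charts $W_{ij}$ ($i\in\{0,1,2\}$, $j\in\{1,2\}$) via the transition formulas \eqref{eq:rel_Wi0} and \eqref{eq:rel_p}, and then to extract the accessible singularities by setting the singular denominator together with the relevant numerators of the right-hand sides to zero. The paragraph preceding the lemma already recorded the defining conditions $p_1^{(01)}=F_1^{(01)}=F_2^{(01)}=G_2^{(01)}=0$ for $W_{01}\times B$; the other five charts are governed by completely analogous polynomial systems.

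The core of the work will be the chart $W_{01}$. Substituting $p_1=1/p_1^{(01)}$ and $p_2=p_2^{(01)}/p_1^{(01)}$ into the Hamilton equations coming from \eqref{eq:FST} produces \eqref{eq:W01_1} with explicit polynomials $F_1^{(01)}, F_2^{(01)}, G_1^{(01)}, G_2^{(01)}$. I then restrict $F_1^{(01)}, F_2^{(01)}, G_2^{(01)}$ to $p_1^{(01)}=0$ and solve the resulting system of three polynomial equations in $q_1, q_2, p_2^{(01)}$ with $t$ as a parameter. Because the coupling term in $H$ carries the factor $(q_1-1)(q_2-t)$ and each Painlev\'{e} VI piece $H_{\rm VI}[\cdots;q_i,p_i]$ carries $q_i(q_i-1)(q_i-t)$, the restricted polynomials factor into linear pieces, and their common zero locus splits into three one-parameter components $C_1^{(01)}, C_2^{(01)}, C_3^{(01)}$ (on which a single $q_i$ remains free) together with the seven isolated components $P_1^{(01)},\ldots,P_7^{(01)}$. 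In the cases $P_4^{(01)},\ldots,P_7^{(01)}$, the specific values of $p_2^{(01)}$ (such as the root of $(t-1)p_2^{(01)}-2=0$) are forced by the residual linear equation once the $q_i$-coordinates have been fixed by the other vanishing factors. The chart $W_{02}$ is parallel, with $p_1=p_1^{(02)}/p_2^{(02)}$ and $p_2=1/p_2^{(02)}$.

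The four remaining charts $W_{11}, W_{12}, W_{21}, W_{22}$ are handled by the same method, starting from the system on $W_{10}\times B$ or $W_{20}\times B$ in place of that on $W_{00}\times B$; by Proposition \ref{prop:Wi0_ham}, the latter two are again polynomial Hamiltonian systems, conjugate to the original via $\Psi_1\pi_2^{-1}$ and $\Psi_2\pi_2$, so the structure of the computation is identical and the defining equations may equally well be pulled back from $W_{0j}$ using \eqref{eq:rel_Wi0} and \eqref{eq:rel_p}.

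The main obstacle I anticipate is bookkeeping and completeness: the three restricted numerators have degree roughly three in four unknowns with coefficients rational in $t$, so their common ideal has many primary components, and one must verify that its minimal primes are precisely the listed three curves and seven points, with no further spurious components. Exploiting the factored form of $H$ from the outset keeps the algebra tractable, and the symmetry transfer described in the preceding paragraph roughly halves the amount of independent calculation required.
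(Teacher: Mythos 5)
Your plan coincides with the paper's own proof (Appendix~\ref{sec:appendix}): restrict the numerators $F_1^{(01)},F_2^{(01)},G_2^{(01)}$ to $p_1^{(01)}=0$, solve the resulting algebraic system by a case analysis driven by the visible linear factors $(q_1-1)$, $(q_2-t)$, $p_2^{(01)}$, and treat the remaining charts in the same way. The only caveat is that the restricted numerators do not factor entirely into linear pieces — each is a linear factor times a higher-degree polynomial — so in the generic case (all three linear factors nonzero) you still need the rank argument the paper uses, namely that the system becomes linear in $\bigl(1,p_2^{(01)}\bigr)$ and all $2\times2$ minors of the resulting $3\times2$ coefficient matrix must vanish; this is the step your ``bookkeeping and completeness'' remark is implicitly relying on.
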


This lemma will be shown in Appendix \ref{sec:appendix}.
In the following, we use a notation of the singularities
\[
	C_1^{(01)} = \left\{q_1-q_2=p_1^{(01)}=p_2^{(01)}+1=0\right\}
\]
for the sake of simplicity.

Although the number of the accessible singularities $C_j^{(kl)}$ and $P_j^{(kl)}$ are 18 and 42 respectively, we can decrease the number by using relations \eqref{eq:rel_q} and \eqref{eq:rel_p}.

\begin{lem}\label{lem:rel_C}
We have 6 equalities between 2 accessible singularities as
\[
	C_1^{(01)} = C_1^{(02)},\quad
	C_1^{(11)} = C_1^{(12)},\quad
	C_1^{(21)} = C_1^{(22)},\quad
	C_2^{(01)} = C_2^{(11)},\quad
	C_2^{(02)} = C_2^{(22)},\quad
	C_2^{(12)} = C_2^{(21)}.
\]
\end{lem}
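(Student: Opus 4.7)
The plan is to verify the six equalities by pulling the defining conditions of each accessible singularity back to the common overlap of the relevant affine patches via the transition relations \eqref{eq:rel_Wi0} and \eqref{eq:rel_p}. The conceptual point is that each $C_j^{(kl)}$ is a codimension-three subvariety of $\Sigma_\eta\times B$ whose projective part is a single point of the $\mathbb{P}^2$-fibre $\{\xi_k\neq 0\}\times\mathbb{P}^2$ sitting on $\zeta_0^{(k)}=0$, and a single such projective point can simultaneously belong to two distinct affine charts $W_{kl}$ and $W_{kl'}$.

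First I would handle the three within-$W_k$ equalities $C_1^{(k1)}=C_1^{(k2)}$ for $k=0,1,2$. In the representative case $C_1^{(01)}=C_1^{(02)}$, reading the $W_{01}$-conditions $p_1^{(01)}=0,\ p_2^{(01)}+1=0$ in homogeneous form via \eqref{eq:hom_affine} gives $\zeta_0^{(0)}=0,\ \zeta_1^{(0)}+\zeta_2^{(0)}=0$, so that $[\zeta_0^{(0)}:\zeta_1^{(0)}:\zeta_2^{(0)}]=[0:1:-1]$, while the $W_{02}$-conditions $p_1^{(02)}+1=0,\ p_2^{(02)}=0$ translate to exactly the same projective point. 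Since $[0:1:-1]$ lies in both $W_{01}$ and $W_{02}$, the two loci coincide in $\Sigma_\eta\times B$, and the equation $q_1=q_2$ is a common base condition. The arguments for $C_1^{(11)}=C_1^{(12)}$ and $C_1^{(21)}=C_1^{(22)}$ are parallel, with $\zeta^{(0)}$ replaced by $\zeta^{(1)}$ or $\zeta^{(2)}$.

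For the three cross-patch equalities $C_2^{(01)}=C_2^{(11)}$, $C_2^{(02)}=C_2^{(22)}$, $C_2^{(12)}=C_2^{(21)}$, I would compose \eqref{eq:rel_Wi0} with \eqref{eq:rel_p}. Take for instance $C_2^{(01)}=C_2^{(11)}$: the $W_{01}$-conditions $q_1=1,\ p_1^{(01)}=p_2^{(01)}=0$ correspond to $[\xi_0:\xi_1:\xi_2]$ with $\xi_0=\xi_1$ and to the projective point $[\zeta_0^{(0)}:\zeta_1^{(0)}:\zeta_2^{(0)}]=[0:1:0]$. Pushing the latter through $Z_{10}$ in \eqref{eq:rel_hom} yields $\zeta^{(1)}=[0:-\xi_1^2:0]=[0:1:0]$, which in the $W_{11}$-chart reads $p_1^{(11)}=p_2^{(11)}=0$, and $q_1^{(1)}=1/q_1=1$ completes the match with the defining conditions of $C_2^{(11)}$. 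The other two cross-chart equalities are identical in spirit, using $Z_{20}$ (or the composition giving the $W_1\leftrightarrow W_2$ transition) in place of $Z_{10}$.

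The main obstacle here is purely organisational rather than conceptual: one must carry the various $\xi$-constraints through the $3\times 3$ matrices $Z_{10},Z_{20}$ and double-check that each codimension-three locus on one side maps on the nose to that on the other. Once the homogeneous-coordinate reformulation is adopted, each of the six equalities reduces to the verification that a prescribed point of $\mathbb{P}^2$ belongs to the two affine patches in question and that the two sets of affine equations cut out the same locus there; I would organise the proof as one short paragraph per equality, each performing this homogeneous substitution.
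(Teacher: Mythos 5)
Your proposal is correct and follows essentially the same route as the paper: both verify that the transition maps between charts identify the defining equations of the two loci in each pair, using \eqref{eq:rel_p} for the within-$W_k$ equalities and the composition of \eqref{eq:rel_Wi0} with \eqref{eq:rel_p} (equivalently, the matrices $Z_{10},Z_{20}$) for the cross-chart ones. The only difference is cosmetic — you phrase the computation in homogeneous coordinates, whereas the paper's proof of this lemma substitutes the affine transition formulas such as \eqref{eq:rel_01_10} directly; the homogeneous-coordinate viewpoint you adopt is exactly what the paper itself uses later in Lemma \ref{lem:rel_CP}.
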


\begin{proof}
We first prove the equality $C_1^{(01)}=C_1^{(02)}$.
Relation \eqref{eq:rel_p} implies
\[
	\left(p_1^{(01)},p_2^{(01)}\right) = \left(\frac{p_2^{(02)}}{p_1^{(02)}},\frac{1}{p_1^{(02)}}\right),
\]
from which we obtain that two conditions $p_1^{(01)}=p_2^{(01)}+1=0$ of $C_1^{(01)}$ and $p_1^{(02)}+1=p_2^{(02)}=0$ of $C_1^{(02)}$ are equivalent.
The equalities $C_1^{(11)}=C_1^{(12)}$ and $C_1^{(21)}=C_1^{(22)}$ can be shown in a similar manner.

We next prove the equality $C_2^{(01)}=C_2^{(11)}$.
Relations \eqref{eq:rel_q} and \eqref{eq:rel_p} imply 
\begin{equation}\label{eq:rel_01_10}\begin{split}
	\left(q_1,q_2,p_1^{(01)},p_2^{(01)}\right) = \left(\frac{1}{q_1^{(1)}},\frac{q_2^{(1)}}{q_1^{(1)}},-\frac{p_1^{(11)}}{q_1^{(1)}\left(q_1^{(1)}+q_2^{(1)}p_2^{(11)}+\eta p_1^{(11)}\right)},-\frac{p_2^{(11)}}{q_1^{(1)}+q_2^{(1)}p_2^{(11)}+\eta p_1^{(11)}}\right),
\end{split}\end{equation}
from which we obtain that two conditions $q_1-1=p_1^{(01)}=p_2^{(01)}=0$ of $C_2^{(01)}$ and $q_1^{(1)}-1=p_1^{(11)}=p_2^{(11)}=0$ of $C_2^{(11)}$ are equivalent.
The equalities $C_2^{(02)}=C_2^{(22)}$ and $C_2^{(12)}=C_2^{(21)}$ can be shown in a similar manner.
\end{proof}

\begin{lem}\label{lem:rel_P}
We have 30 equalities between 2 or 4 accessible singularities as
\begin{align*}
	&P_3^{(01)} = P_3^{(21)},\quad
	P_3^{(02)} = P_3^{(12)},\quad
	P_3^{(11)} = P_3^{(22)},\\
	&P_1^{(01)} = P_1^{(11)} = P_6^{(21)} = P_6^{(22)},\quad
	P_2^{(01)} = P_2^{(11)} = P_7^{(21)} = P_7^{(22)},\quad
	P_1^{(02)} = P_6^{(11)} = P_6^{(12)} = P_1^{(22)},\\
	&P_2^{(02)} = P_7^{(11)} = P_7^{(12)} = P_2^{(22)},\quad
	P_4^{(01)} = P_4^{(02)} = P_5^{(21)} = P_5^{(22)},\quad
	P_5^{(01)} = P_5^{(02)} = P_4^{(11)} = P_4^{(12)},\\
	&P_6^{(01)}= P_6^{(02)} = P_1^{(12)} = P_1^{(21)},\quad
	P_7^{(01)} = P_7^{(02)} = P_2^{(12)} = P_2^{(21)},\quad
	P_5^{(11)} = P_5^{(12)} = P_4^{(21)} = P_4^{(22)}.
\end{align*}
\end{lem}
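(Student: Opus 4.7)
The plan is to follow the pattern of Lemma \ref{lem:rel_C}: each of the thirty equalities is obtained by substituting the appropriate coordinate relation from \eqref{eq:rel_Wi0} and \eqref{eq:rel_p} (or a composition of them) into the four defining equations of one of the accessible singularities and checking that, after clearing common nonvanishing factors at the locus, the resulting equations coincide with the defining equations of the other(s).

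First, I would tabulate the needed transition maps. Relations \eqref{eq:rel_Wi0} give the birational changes of coordinates between $W_{00}$, $W_{10}$, $W_{20}$, while \eqref{eq:rel_p} gives, for each fixed $i$, the changes between $W_{i0}$, $W_{i1}$, $W_{i2}$; composing these yields explicit (in general rational) maps between any two $W_{ij}$, one instance of which is formula \eqref{eq:rel_01_10} already derived in the proof of Lemma \ref{lem:rel_C}. The key observation to keep in mind is that the $p$-inversion \eqref{eq:rel_p} interchanges the role of a vanishing condition $p_a^{(ij)}=0$ with an affine condition $p_b^{(ik)}+c=0$; this is exactly what produces the subscript shifts $P_1\leftrightarrow P_6$, $P_2\leftrightarrow P_7$, $P_4\leftrightarrow P_5$ appearing in the claimed identities.

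Second, I would process the eleven listed classes one at a time. For example, the equality $P_1^{(01)}=P_1^{(11)}$ follows directly from \eqref{eq:rel_01_10}: the conditions $q_1-t=q_2-t=p_1^{(01)}=p_2^{(01)}=0$ transform to $tq_1^{(1)}-1=q_2^{(1)}-1=p_1^{(11)}=p_2^{(11)}=0$, once one observes that the denominator $q_1^{(1)}+q_2^{(1)}p_2^{(11)}+\eta p_1^{(11)}$ evaluates to a nonzero constant at the locus. The equality $P_1^{(01)}=P_6^{(21)}$ within the same chain is obtained by composing the $W_{00}\to W_{20}$ transition from \eqref{eq:rel_Wi0} with the $p$-inversion \eqref{eq:rel_p}, after which the subscript shift $1\mapsto 6$ emerges from the swap of zero and affine conditions on the $p$-coordinates. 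The remaining four-way chains are handled analogously, and the three two-way equalities $P_3^{(01)}=P_3^{(21)}$, $P_3^{(02)}=P_3^{(12)}$, $P_3^{(11)}=P_3^{(22)}$ require only the $W_{i0}\leftrightarrow W_{k0}$ relations from \eqref{eq:rel_Wi0}.

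The principal obstacle is bookkeeping rather than mathematical depth: thirty equalities, each consisting of a rational substitution into four polynomial equations together with a denominator check. To keep the proof within reasonable length, I would write out one representative equality from each of the eleven classes in full and indicate that the remaining ones follow by entirely analogous calculations, matching the style adopted for Lemma \ref{lem:rel_C}.
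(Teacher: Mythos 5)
Your proposal is correct and follows essentially the same route as the paper: the paper likewise verifies one representative chain ($P_1^{(01)}=P_1^{(11)}=P_6^{(21)}=P_6^{(22)}$) by substituting relation \eqref{eq:rel_01_10} and the compositions of \eqref{eq:rel_Wi0} with \eqref{eq:rel_p}, and dismisses the remaining equalities as analogous. The only minor imprecision is your claim that the $P_3$ identities need ``only'' \eqref{eq:rel_Wi0}; since those singularities live on the charts $W_{01}$, $W_{21}$, etc., the $p$-inversion \eqref{eq:rel_p} is still needed to pass to the $W_{i0}$ coordinates, but this does not affect the argument.
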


\begin{proof}
We prove only the equality $P_1^{(01)}=P_1^{(11)}=P_6^{(21)}=P_6^{(22)}$ here.
We first obtain from relation \eqref{eq:rel_01_10} that two conditions $q_1-t=q_2-t=p_1^{(01)}=p_2^{(01)}=0$ of $P_1^{(01)}$ and $tq_1^{(1)}-1=q_2^{(1)}-1=p_1^{(11)}=p_2^{(11)}=0$ of $P_1^{(11)}$ are equivalent.
Next, relations \eqref{eq:rel_q} and \eqref{eq:rel_p} imply
\[
	\left(q_1,q_2,p_1^{(01)},p_2^{(01)}\right) = \left(\frac{q_1^{(2)}}{q_2^{(2)}},\frac{1}{q_2^{(2)}},\frac{p_1^{(21)}}{q_2^{(2)}},-\left(q_1^{(2)}+q_2^{(2)}p_2^{(21)}+\eta p_1^{(21)}\right)\right),
\]
from which we obtain that the condition $q_1^{(2)}-1=tq_2^{(2)}-1=p_1^{(21)}=p_2^{(21)}+t=0$ of $P_6^{(21)}$ coincides with that of $P_1^{(01)}$.
In the last, relation \eqref{eq:rel_p} implies
\[
	\left(p_1^{(21)},p_2^{(21)}\right) = \left(\frac{p_2^{(22)}}{p_1^{(22)}},\frac{1}{p_1^{(22)}}\right),
\]
from which we obtain that the condition $q_1^{(2)}-1= tq_2^{(2)}-1=tp_1^{(22)}+1=p_2^{(22)}=0$ of $P_6^{(22)}$ coincides with that of $P_6^{(21)}$.
Hence we obtain the equality
\[
	P_1^{(01)} = P_1^{(11)} = P_6^{(21)} = P_6^{(22)}.
\]
The other equalities can be shown in a similar manner.
\end{proof}

We have reduced the accessible singularities of $\mathcal{H}$ to
\begin{equation}\label{eq:as_list_tmp}\begin{split}
	&C_1^{(01)},\ C_2^{(01)},\ C_3^{(01)},\ C_3^{(02)},\ 
	C_1^{(11)},\ C_3^{(11)},\ C_2^{(12)},\ C_3^{(12)},\ 
	C_1^{(21)},\ C_3^{(21)},\ C_2^{(22)},\ C_3^{(22)},\\ 
	&P_3^{(01)},\ P_4^{(01)},\ P_6^{(01)},\ P_7^{(01)},\ 
	P_3^{(12)},\ P_4^{(12)},\ P_6^{(12)},\ P_7^{(12)},\ 
	P_3^{(22)},\ P_4^{(22)},\ P_6^{(22)},\ P_7^{(22)}
\end{split}\end{equation}
thanks to Lemma \ref{lem:rel_C} and Lemma \ref{lem:rel_P}.
Let $C^0$ and $P^0$ be unions of the accessible singularities defined by
\[
	C^0 = C_1^{(01)}\cup C_2^{(01)} \cup C_3^{(01)}\cup C_3^{(02)}\cup C_1^{(11)}\cup C_3^{(11)}\cup C_2^{(12)}\cup C_3^{(12)}\cup C_1^{(21)}\cup C_3^{(21)}\cup C_2^{(22)}\cup C_3^{(22)}
\]
and
\[
	P^0 = P_3^{(01)}\cup P_4^{(01)}\cup P_6^{(01)}\cup P_7^{(01)}\cup P_3^{(12)}\cup P_4^{(12)}\cup P_6^{(12)}\cup P_7^{(12)}\cup P_3^{(22)}\cup P_4^{(22)}\cup P_6^{(22)}\cup P_7^{(22)}
\]
respectively.
Then we obtain the following lemma.

\begin{lem}\label{lem:C_connect}
The union $C^0$ is described as a disjoint union of three connected components
\[
	C^0 = C_1^0 \sqcup C_2^0 \sqcup C_3^0,
\]
where
\begin{align*}
	&C_1^0 = C_1^{(01)} \cup C_3^{(01)} \cup C_3^{(02)} \cup C_2^{(12)},\\
	&C_2^0 = C_1^{(11)} \cup C_3^{(11)} \cup C_3^{(12)} \cup C_2^{(22)},\\
	&C_3^0 = C_1^{(21)} \cup C_3^{(21)} \cup C_3^{(22)} \cup C_2^{(01)}.
\end{align*}
Moreover, $C^0$ and $P^0$ do not intersect. 
\end{lem}

This lemma will be shown in Appendix \ref{sec:appendixB}.
Lemma \ref{lem:rel_C}, Lemma \ref{lem:rel_P} and Lemma \ref{lem:C_connect} imply the following proposition.

\begin{prop}\label{prop:resolve_as}
All of the accessible singularities of the system $\mathcal{H}$ is reduced to $C^0$ and $P^0$.
\end{prop}

In the last, we investigate solutions passing through the singularity $D^0\setminus(C^0\cup P^0)$.
We consider a point $\left(q_1,q_2,p_1^{(01)},p_2^{(01)},t\right)=(a_0,b_0,0,d_0,t_0)$ such that $F_1^{(01)}(a_0,b_0,0,d_0,t_0)\neq0$ as an example.
System \eqref{eq:W01_1} is rewritten to a Pfaff system
\begin{align*}
	&F_1^{(01)}dt - t(t-1)p_1^{(01)}dq_1 = 0,\\
	&F_1^{(01)}dq_2 - F_2^{(01)}dq_1 = 0,\\
	&F_1^{(01)}dp_1^{(01)} - p_1^{(01)}G_1^{(01)}dq_1 = 0,\\
	&F_1^{(01)}dp_2^{(01)} - G_2^{(01)}dq_1 = 0.
\end{align*}
Then the solution to this system passing through a point $\left(q_1,q_2,p_1^{(01)},p_2^{(01)},t\right)=(a_0,b_0,0,d_0,t_0)$ satisfies
\[
	p_1^{(01)} = 0,\quad
	t = t_0.
\]
This solution corresponds to a vertical leaf.
We can show that all of the leaves corresponding to the solutions to $\mathcal{H}$ passing through the singularity $D^0\setminus(C^0\cup P^0)$ are vertical leaves in a similar manner.

\section{Resolving the accessible singularities $C^0$}\label{sec:resolve_C}

In the previous section, we have determined the accessible singularities of the system $\mathcal{H}$.
As will be seen below, there are infinitely many solutions passing through each accessible singularity.
In this section, we resolve the accessible singularities $C^0$ by introducing new coordinates which parametrize these solutions.
We give the result of this section in advance.

\begin{prop}\label{prop:resolve_C}
Coordinates resolving the accessible singularities $C^0$ are given by
\begin{align*}
	C_1^{(01)} : 
	&\left(x_1^{[01]},y_1^{[01]},z_1^{[01]},w_1^{[01]}\right) = \left(-\{(q_1-q_2)p_1-\alpha_2\}p_1,q_2,\dfrac{1}{p_1},p_1+p_2\right),\\
	C_3^{(01)} : 
	&\left(x_3^{[01]},y_3^{[01]},z_3^{[01]},w_3^{[01]}\right) = \left(-(q_1p_1+q_2p_2-\alpha_5+\eta)p_1,q_2p_1,\dfrac{1}{p_1},\dfrac{p_2}{p_1}\right),\\
	&\left(\widehat{x}_3^{[01]},\widehat{y}_3^{[01]},\widehat{z}_3^{[01]},\widehat{w}_3^{[01]}\right) = r_2\left(x_3^{[01]},y_3^{[01]},z_3^{[01]},w_3^{[01]}\right),\\
	C_3^{(02)} : 
	&\left(x_3^{[02]},y_3^{[02]},z_3^{[02]},w_3^{[02]}\right) = \left(q_1p_2,-(q_1p_1+q_2p_2-\alpha_5+\eta)p_2,\dfrac{p_1}{p_2},\dfrac{1}{p_2}\right),\\
	&\left(\widehat{x}_3^{[02]},\widehat{y}_3^{[02]},\widehat{z}_3^{[02]},\widehat{w}_3^{[02]}\right) = r_2\left(x_3^{[02]},y_3^{[02]},z_3^{[02]},w_3^{[02]}\right),\\
	C_2^{(12)} : 
	&\left(x_2^{[12]},y_2^{[12]},z_2^{[12]},w_2^{[12]}\right) = \left(q_1^{(1)},-\left\{\left(q_2^{(1)}-1\right)p_2^{(1)}-\alpha_2\right\}p_2^{(1)},p_1^{(1)},\dfrac{1}{p_2^{(1)}}\right)
\end{align*}
for $C_1^0$,
\begin{align*}
	C_1^{(11)} :
	&\left(x_1^{[11]},y_1^{[11]},z_1^{[11]},w_1^{[11]}\right) =\left(-\left\{\left(q_1^{(1)}-\dfrac{q_2^{(1)}}{t}\right)p_1^{(1)}-\alpha_4\right\}p_1^{(1)},q_2^{(1)},\dfrac{1}{p_1^{(1)}},p_2^{(1)}+\dfrac{p_1^{(1)}}{t}\right),\\ 
	C_3^{(11)} : 
	&\left(x_3^{[11]},y_3^{[11]},z_3^{[11]},w_3^{[11]}\right) = \left(-\left(q_1^{(1)}p_1^{(1)}+q_2^{(1)}p_2^{(1)}-\alpha_1+\eta\right)p_1^{(1)},q_2^{(1)}p_1^{(1)},\dfrac{1}{p_1^{(1)}},\dfrac{p_2^{(1)}}{p_1^{(1)}}\right),\\
	&\left(\widehat{x}_3^{[11]},\widehat{y}_3^{[11]},\widehat{z}_3^{[11]},\widehat{w}_3^{[11]}\right) = r_4\left(x_3^{[11]},y_3^{[11]},z_3^{[11]},w_3^{[11]}\right),\\
	C_3^{(12)} : 
	&\left(x_3^{[12]},y_3^{[12]},z_3^{[12]},w_3^{[12]}\right) = \left(q_1^{(1)}p_2^{(1)},-\left(q_1^{(1)}p_1^{(1)}+q_2^{(1)}p_2^{(1)}-\alpha_1+\eta\right)p_2^{(1)},\dfrac{p_1^{(1)}}{p_2^{(1)}},\dfrac{1}{p_2^{(1)}}\right),\\
	&\left(\widehat{x}_3^{[12]},\widehat{y}_3^{[12]},\widehat{z}_3^{[12]},\widehat{w}_3^{[12]}\right) = r_4\left(x_3^{[12]},y_3^{[12]},z_3^{[12]},w_3^{[12]}\right),\\
	C_2^{(22)} : 
	&\left(x_2^{[22]},y_2^{[22]},z_2^{[22]},w_2^{[22]}\right) = \left(q_1^{(2)},-\left\{\left(q_2^{(2)}-\frac{1}{t}\right)p_2^{(2)}-\alpha_4\right\}p_2^{(2)},p_1^{(2)},\frac{1}{p_2^{(2)}}\right)
\end{align*}
for $C_2^0$ and
\begin{align*}	
	C_2^{(01)} : 
	&\left(x_2^{[01]},y_2^{[01]},z_2^{[01]},w_2^{[01]}\right) = \left(-\{(q_1-1)p_1-\alpha_0\}p_1,q_2,\dfrac{1}{p_1},p_2\right),\\
	C_1^{(21)} :
	&\left(x_1^{[21]},y_1^{[21]},z_1^{[21]},w_1^{[21]}\right) = \left(-\left\{\left(q_1^{(2)}-q_2^{(2)}\right)p_1^{(2)}-\alpha_0\right\}p_1^{(2)},q_2^{(2)},\dfrac{1}{p_1^{(2)}},p_1^{(2)}+p_2^{(2)}\right),\\ 
	C_3^{(21)} : 
	&\left(x_3^{[21]},y_3^{[21]},z_3^{[21]},w_3^{[21]}\right) = \left(-\left(q_1^{(2)}p_1^{(2)}+q_2^{(2)}p_2^{(2)}-\alpha_3+\eta\right)p_1^{(2)},q_2^{(2)}p_1^{(2)},\dfrac{1}{p_1^{(2)}},\dfrac{p_2^{(2)}}{p_1^{(2)}}\right),\\
	&\left(\widehat{x}_3^{[21]},\widehat{y}_3^{[21]},\widehat{z}_3^{[21]},\widehat{w}_3^{[21]}\right) = r_0\left(x_3^{[21]},y_3^{[21]},z_3^{[21]},w_3^{[21]}\right),\\
	C_3^{(22)} : 
	&\left(x_3^{[22]},y_3^{[22]},z_3^{[22]},w_3^{[22]}\right) = \left(q_1^{(2)}p_2^{(2)},-\left(q_1^{(2)}p_1^{(2)}+q_2^{(2)}p_2^{(2)}-\alpha_3+\eta\right)p_2^{(2)},\dfrac{p_1^{(2)}}{p_2^{(2)}},\dfrac{1}{p_2^{(2)}}\right),\\
	&\left(\widehat{x}_3^{[22]},\widehat{y}_3^{[22]},\widehat{z}_3^{[22]},\widehat{w}_3^{[22]}\right) = r_0\left(x_3^{[22]},y_3^{[22]},z_3^{[22]},w_3^{[22]}\right)
\end{align*}
for $C_3^0$.
Namely, the system on each coordinate $\left(x_j^{[kl]},y_j^{[kl]},z_j^{[kl]},w_j^{[kl]}\right)$ or $\left(\widehat{x}_j^{[kl]},\widehat{y}_j^{[kl]},\widehat{z}_j^{[kl]},\widehat{w}_j^{[kl]}\right)$ is described as a polynomial Hamiltonian system.
\end{prop}

Here we use the notation 
\begin{align*}
	&r_2\left(x_3^{[01]},y_3^{[01]},z_3^{[01]},w_3^{[01]}\right)\\
	&= \left(r_2\left(-(q_1p_1+q_2p_2-\alpha_5+\eta)p_1\right),r_2(q_2p_1),r_2\left(\dfrac{1}{p_1}\right),r_2\left(\dfrac{p_2}{p_1}\right)\right)\\
	&= \bigg(-\frac{(q_1p_1+q_2p_2-\alpha_5+\eta)\{(q_1-q_2)p_1-\alpha_2\}}{q_1-q_2},\frac{q_2\{(q_1-q_2)p_1-\alpha_2\}}{q_1-q_2},\\
	&\quad\qquad\frac{q_1-q_2}{(q_1-q_2)p_1-\alpha_2},\frac{(q_1-q_2)p_2+\alpha_2}{(q_1-q_2)p_1-\alpha_2}\bigg)
\end{align*}
for the sake of simplicity.
This proposition is shown in two ways.
One is the blowing up method and another is that in terms of formal power series.
Note that the latter method will be used in the next section to resolve $P^0$.
Because we can prove in the same way for all of the accessible singularities, we consider only
\begin{align*}
	C_1^0
	&= C_1^{(01)} \cup C_3^{(01)} \cup C_3^{(02)} \cup C_2^{(12)}\\
	&= \left\{q_1-q_2=p_1^{(01)}=p_2^{(01)}+1=0\right\} \cup \left\{q_1=q_2=p_1^{(01)}=0\right\}\\
	&\quad \cup \left\{q_1=q_2=p_2^{(02)}=0\right\} \cup \left\{q_2^{(1)}-1=p_1^{(12)}=p_2^{(12)}=0\right\}
\end{align*}
here.

\subsection{Blowing up}\label{subsec:blow_C}

Let $\left(u_{1i},u_{2i},v_{1i},v_{2i}\right)$, $\left(u'_{1i},u'_{2i},v'_{1i},v'_{2i}\right)$ and $\left(u''_{1i},u''_{2i},v''_{1i},v''_{2i}\right)\ (i=1,2,3,4)$ be affine coordinates of $\mathbb{C}^4$ defined by
\begin{align*}
	&q_1 - q_2 = u_{11}v_{11},\quad
	q_2 = u_{21},\quad
	p_1^{(01)} = v_{11},\quad 
	p_2^{(01)} + 1 = v_{11}v_{21},\\
	&q_1 - q_2 = u_{11}',\quad
	q_2 = u_{21}',\quad
	p_1^{(01)} = u_{11}'v_{11}',\quad 
	p_2^{(01)} + 1 = u_{11}'v_{21}',\\
	&q_1 - q_2 = u_{11}''v_{21}'',\quad
	q_2 = u_{21}'',\quad
	p_1^{(01)} = v_{11}''v_{21}'',\quad 
	p_2^{(01)} + 1 = v_{21}''
\end{align*}
for $C_1^{(01)}$,
\begin{equation}\label{eq:blow_C301}\begin{split}
	&q_1 = u_{12}v_{12},\quad
	q_2 = u_{22}v_{12},\quad
	p_1^{(01)} = v_{12},\quad 
	p_2^{(01)} = v_{22},\\
	&q_1 = u_{12}',\quad
	q_2 = u_{12}'u_{22}',\quad
	p_1^{(01)} = u_{12}'v_{12}',\quad 
	p_2^{(01)} = v_{22}',\\
	&q_1 = u_{12}''u_{22}'',\quad
	q_2 = u_{22}'',\quad
	p_1^{(01)} = u_{22}''v_{12}'',\quad 
	p_2^{(01)} = v_{22}''
\end{split}\end{equation}
for $C_3^{(01)}$,
\begin{align*}
	&q_1 = u_{13}v_{23},\quad
	q_2 = u_{23}v_{23},\quad
	p_1^{(02)} = v_{13},\quad 
	p_2^{(02)} = v_{23},\\
	&q_1 = u_{13}',\quad
	q_2 = u_{13}'u_{23}',\quad
	p_1^{(02)} = v_{13}',\quad 
	p_2^{(02)} = u_{13}'v_{23}',\\
	&q_1 = u_{13}''u_{23}'',\quad
	q_2 = u_{23}'',\quad
	p_1^{(02)} = v_{13}'',\quad 
	p_2^{(02)} = u_{23}''v_{23}''
\end{align*}
for $C_3^{(02)}$ and
\begin{align*}
	&q_1^{(1)} = u_{14},\quad
	q_2^{(1)} - 1 = u_{24}v_{24},\quad
	p_1^{(12)} = v_{14}v_{24},\quad 
	p_2^{(12)} = v_{24},\\
	&q_1^{(1)} = u_{14}',\quad
	q_2^{(1)} - 1 = u_{24}',\quad
	p_1^{(12)} = u_{24}'v_{14}',\quad 
	p_2^{(12)} = u_{24}'v_{24}',\\
	&q_1^{(1)} = u_{14}'',\quad
	q_2^{(1)} - 1 = u_{24}''v_{14}'',\quad
	p_1^{(12)} = v_{14}'',\quad 
	p_2^{(12)} = v_{14}''v_{24}''
\end{align*}
for $C_2^{(12)}$.
On these coordinates, the accessible singularity $C_1^0$ corresponds to
\begin{equation}\label{eq:exdivisor_1}\begin{split}
	D_1^1 
	&= \left\{v_{11}=0\right\} \cup \left\{u_{11}'=0\right\} \cup \left\{v_{21}''=0\right\} \cup \left\{v_{12}=0\right\} \cup \left\{u_{12}'=0\right\} \cup \left\{u_{22}''=0\right\}\\
	&\quad \cup \left\{v_{23}=0\right\} \cup \left\{u_{13}'=0\right\} \cup \left\{u_{23}''=0\right\} \cup \left\{v_{24}=0\right\} \cup \left\{u_{24}'=0\right\} \cup \left\{v_{14}''=0\right\}.
\end{split}\end{equation}
The union $D_1^1$ is connected and called an exceptional divisor.
We consider the accessible singularity $C_3^{(01)}$.
System \eqref{eq:W01_1} on the coordinate $(u_{12},u_{22},v_{12},v_{22})$ is described as 
\begin{equation}\label{eq:blow1st_1}\begin{split}
	&t(t-1)v_{12}du_{12} - U_{12}dt = 0,\\
	&t(t-1)v_{12}du_{22} - U_{22}dt = 0,\\
	&t(t-1)dv_{12} - V_{12}dt = 0,\\
	&t(t-1)dv_{22} - V_{22}dt = 0,
\end{split}\end{equation}
where $U_{12},U_{22},V_{12},V_{22}\in\mathbb{C}[u_{12},u_{22},v_{12},v_{22},t]$.
The singularity and the accessible singularity of system \eqref{eq:blow1st_1} on $D_1^1$ are given by
\[
	\left\{v_{12}=0\right\}
\]
and
\begin{equation}\label{eq:blow1st_1_as}
	\left\{u_{12}+u_{22}v_{22}-\alpha_5+\eta=v_{12}=0\right\}
\end{equation}
respectively.
System \eqref{eq:W01_1} on the coordinate $(u_{12}',u_{22}',v_{12}',v_{22}')$ is described as 
\begin{equation}\label{eq:blow1st_2}\begin{split}
	&t(t-1)v_{12}'du_{12}' - U_{12}'dt = 0,\\
	&t(t-1)u_{12}'v_{12}'du_{22}' - U_{22}'dt = 0,\\
	&t(t-1)u_{12}'dv_{12}' - V_{12}'dt = 0,\\
	&t(t-1)v_{12}'dv_{22}' - V_{22}'dt = 0,
\end{split}\end{equation}
where $U_{12}',U_{22}',V_{12}',V_{22}'\in\mathbb{C}\left[u_{12}',u_{22}',v_{12}',v_{22}',t\right]$.
The singularity and the accessible singularity of system \eqref{eq:blow1st_2} on $D_1^1$ are given by
\[
	\left\{u_{12}'=0\right\} \cup \left\{v_{12}'=0\right\}
\]
and
\begin{equation}\label{eq:blow1st_2_as}
	\left\{u'_{12}=u_{22}'v_{22}'+(-\alpha_5+\eta)v_{12}'+1=0,\ v_{12}'\neq0\right\} \cup \left\{u'_{12}=u_{22}'-1=v'_{12}=v_{22}'+1=0\right\}
\end{equation}
respectively.
System \eqref{eq:W01_1} on the coordinate $(u_{12}'',u_{22}'',v_{12}'',v_{22}'')$ is described as 
\begin{align*}
	&t(t-1)u_{22}''v_{12}''du_{12}'' - U_{12}''dt = 0,\\
	&t(t-1)v_{12}''du_{22}'' - U_{22}''dt = 0,\\
	&t(t-1)u_{22}''dv_{12}'' - V_{12}''dt = 0,\\
	&t(t-1)v_{12}''dv_{22}'' - V_{22}''dt = 0,
\end{align*}
where $U_{12}'',U_{22}'',V_{12}'',V_{22}''\in\mathbb{C}\left[u''_{12},u''_{22},v''_{12},v''_{22},t\right]$.
The singularity and the accessible singularity of this system on $D_1^1$ are given by
\[
	\left\{u_{22}''=0\right\} \cup \left\{v_{12}''=0\right\}
\]
and
\begin{equation}\label{eq:blow1st_3_as}
	\left\{u''_{12}+(-\alpha_5+\eta)v_{12}''+v_{22}''=u_{22}''=0,\ v''_{12}\neq0\right\} \cup \left\{u''_{12}-1=u''_{22}=v_{12}''=v_{22}''+1=0\right\}
\end{equation}
respectively.
Relation \eqref{eq:blow_C301} implies
\begin{align*}
	&\left(u'_{12},u'_{22},v'_{12},v'_{22}\right) = \left(u_{12}v_{12},\frac{u_{22}}{u_{12}},\frac{1}{u_{12}},v_{22}\right),\\
	&\left(u''_{12},u''_{22},v''_{12},v''_{22}\right) = \left(\frac{u_{12}}{u_{22}},u_{22}v_{12},\frac{1}{u_{22}},v_{22}\right),\\
	&\left(u'_{12},u'_{22},v'_{12},v'_{22}\right) = \left(u_{12}''u_{22}'',\frac{1}{u_{12}''},\frac{v_{12}''}{u_{11}''},v_{22}''\right),
\end{align*}
from which the first components \eqref{eq:blow1st_2_as} and \eqref{eq:blow1st_3_as} are both contained in \eqref{eq:blow1st_1_as} and the second components \eqref{eq:blow1st_2_as} and \eqref{eq:blow1st_3_as} are equal.
We can consider the other accessible singularities $C_1^{(01)}$, $C_3^{(02)}$ and $C_2^{(12)}$ in a similar manner.
Consequently, we obtain new accessible singularities
\begin{equation}\label{eq:as_2nd}\begin{split}
	C_1^1 
	&= \left\{u_{11}-\alpha_2=v_{11}=0\right\} \cup \left\{u_{12}+u_{22}v_{22}-\alpha_5+\eta=v_{12}=0\right\}\\
	&\quad \cup \left\{u_{13}v_{13}+u_{23}-\alpha_5+\eta=v_{23}=0\right\} \cup \left\{u_{24}-\alpha_2=v_{24}=0\right\},\\
	P_1^1 
	&= \left\{u'_{12}=u_{22}'-1=v'_{12}=v_{22}'+1=0\right\}
\end{split}\end{equation}
on $D_1^1$.
We extend the system $\mathcal{H}$ to that on $\left(\Sigma_\eta\cup D_1^1\right)\times B$ and denote it by $\mathcal{H}_1^1$ temporarily.
Then there exist transversal leaves corresponding to the solutions to $\mathcal{H}_1^1$ passing through $C_1^1\cup P_1^1$.
Moreover, all of the leaves corresponding to the solutions to $\mathcal{H}_1^1$ passing through the singularity $D_1^1\setminus\left(C_1^1\cup P_1^1\right)$ are vertical leaves.
This fact can be shown in a similar manner as in $D^0\setminus(C^0\cup P^0)$.
We do not state its detail here.

We next consider the blowing up at the accessible singularity $C_1^1$.
We set eight affine coordinates of $\mathbb{C}^4$ as  
\begin{align*}
	&\alpha_2 - u_{11} = x_1^{[01]}z_1^{[01]},\quad
	u_{21} = y_1^{[01]},\quad
	v_{11} = z_1^{[01]},\quad
	v_{21} = w_1^{[01]},\\
	&\alpha_2 - u_{11} = x'_1,\quad
	u_{21} = y'_1,\quad
	v_{11} = x'_1z'_1,\quad
	v_{21} = w'_1,\\
	&-(u_{12}+u_{22}v_{22}-\alpha_5+\eta) = x_3^{[01]}z_3^{[01]},\quad
	u_{22} = y_3^{[01]},\quad
	v_{12} = z_3^{[01]},\quad
	v_{22} = w_3^{[01]},\\
	&-(u_{12}+u_{22}v_{22}-\alpha_5+\eta) = x_{31}',\quad
	u_{22} = y_{31}',\quad
	v_{12} = x_{31}'z_{31}',\quad
	v_{22} = w_{31}',\\
	&u_{13} = x_3^{[02]},\quad
	-(u_{13}v_{13}+u_{23}-\alpha_5+\eta) = y_3^{[02]}w_3^{[02]},\quad
	v_{13} = z_3^{[02]},\quad
	v_{23} = w_3^{[02]},\\
	&u_{13} = x_{32}',\quad
	-(u_{13}v_{13}+u_{23}-\alpha_5+\eta) = y_{32}',\quad
	v_{13} = z_{32}',\quad
	v_{23} = y_{32}'w_{32}',\\
	&u_{14} = x_2^{[12]},\quad
	\alpha_2 - u_{24} = y_2^{[12]}w_2^{[12]},\quad
	v_{14} = z_2^{[12]},\quad
	v_{24} = w_2^{[12]},\\
	&u_{14} = x_2',\quad
	\alpha_2 - u_{24} = y_2',\quad
	v_{14} = z_2',\quad
	v_{24} = y_2'w_2'.
\end{align*}
On these coordinates, the accessible singularity $C_1^1$ corresponds to
\begin{equation}\label{eq:exdivisor_2}\begin{split}
	D_1^2 
	&= \left\{z_1^{[01]}=0\right\} \cup \left\{x'_1=0\right\} \cup \left\{z_3^{[01]}=0\right\} \cup \left\{x'_{31}=0\right\}\\
	&\quad \cup \left\{w_3^{[02]}=0\right\} \cup \left\{y'_{32}=0\right\} \cup \left\{w_2^{[12]}=0\right\} \cup \left\{y'_2=0\right\}.
\end{split}\end{equation}
The union $D_1^2$ is connected and called an exceptional divisor.
We consider the accessible singularity $\left\{u_{12}+u_{22}v_{22}-\alpha_5+\eta=v_{12}=0\right\}$.
System \eqref{eq:blow1st_1} on the coordinate $\left(x_3^{[01]},y_3^{[01]},z_3^{[01]},w_3^{[01]}\right)$ is described as
\begin{equation}\label{eq:blow2nd_1}\begin{split}
	&t(t-1)dx_3^{[01]} - X_3dt = 0,\\
	&t(t-1)dy_3^{[01]} - Y_3dt = 0,\\
	&t(t-1)dz_3^{[01]} - Z_3dt = 0,\\
	&t(t-1)dw_3^{[01]} - W_3dt = 0,
\end{split}\end{equation}
where $X_3,Y_3,Z_3,W_3\in\mathbb{C}\left[x_3^{[01]},y_3^{[01]},z_3^{[01]},w_3^{[01]},t\right]$, which does not have any singularity on $D_1^{2}$.
Then the relation between the coordinates $\left(q_1,q_2,p_1^{(01)},p_2^{(01)}\right)$ and $\left(x_3^{[01]},y_3^{[01]},z_3^{[01]},w_3^{[01]}\right)$ is given by
\begin{equation}\label{eq:C301_coor_01}
	q_1 = -\left(x_3^{[01]}z_3^{[01]}+y_3^{[01]}w_3^{[01]}-\alpha_5+\eta\right)z_3^{[01]},\quad
	q_2 = y_3^{[01]}z_3^{[01]},\quad
	p_1^{(01)} = z_3^{[01]},\quad
	p_2^{(01)} = w_3^{[01]}.
\end{equation}
Note that the accessible singularity $C_3^{(01)}$ corresponds to $\left\{z_3^{[01]}=0\right\}$.
Combining relations \eqref{eq:rel_p} and \eqref{eq:C301_coor_01}, we obtain
\begin{equation}\label{eq:C301_coor}
	x_3^{[01]} = -(q_1p_1+q_2p_2-\alpha_5+\eta)p_1,\quad
	y_3^{[01]} = q_2p_1,\quad
	z_3^{[01]} = \dfrac{1}{p_1},\quad
	w_3^{[01]} = \frac{p_2}{p_1}
\end{equation}
and it is a symplectic transformation.
Moreover, system \eqref{eq:blow2nd_1} is described as a polynomial Hamiltonian system.
We can show it by a direct calculation.
Note that all of the leaves corresponding to the solutions to system \eqref{eq:blow2nd_1} passing through the set $\left\{z_3^{[01]}=0\right\}$ are transversal leaves.
Besides, system \eqref{eq:blow1st_1} on the coordinate $\left(x'_{31},y'_{31},z'_{31},w'_{31}\right)$ is described as
\begin{align*}
	&t(t-1)z'_{31}dx'_{31} - X'_{31}dt = 0,\\
	&t(t-1)z'_{31}dy'_{31} - Y'_{31}dt = 0,\\
	&t(t-1)dz'_{31} - Z'_{31}dt = 0,\\
	&t(t-1)dw'_{31} - W'_{31}dt = 0,
\end{align*}
where $X'_{31},Y'_{31},Z'_{31},W'_{31}\in\mathbb{C}\left[x'_{31},y'_{31},z'_{31},w'_{31},t\right]$.
The singularity of this system on $D_1^2$ is given by
\[
	\left\{x'_{31}=z'_{31}=0\right\}.
\]
We extend the system $\mathcal{H}$ to that on $\left(\Sigma_\eta\cup D_1^1\cup D_1^2\right)\times B$ and denote it by $\mathcal{H}_1^2$ temporarily.
Then all of the leaves corresponding to the solutions to $\mathcal{H}_1^2$ passing through the singularity $\left\{x'_{31}=z'_{31}=0\right\}$ are vertical leaves.
This fact can be shown in a similar manner as in $D^0\setminus(C^0\cup P^0)$.
We can consider the other accessible singularities similarly and obtain
\begin{align*}
	&x_1^{[01]} = -\{(q_1-q_2)p_1-\alpha_2\}p_1,\quad
	y_1^{[01]} = q_2,\quad
	z_1^{[01]} = \dfrac{1}{p_1},\quad
	w_1^{[01]} = p_1 + p_2,\\
	&x_3^{[02]} = q_1p_2,\quad
	y_3^{[02]} = -(q_1p_1+q_2p_2-\alpha_5+\eta)p_2,\quad
	z_3^{[02]} = \dfrac{p_1}{p_2},\quad
	w_3^{[02]} = \dfrac{1}{p_2},\\
	&x_2^{[12]} = q_1^{(1)},\quad
	y_2^{[12]} = -\left\{\left(q_2^{(1)}-1\right)p_2^{(1)}-\alpha_2\right\}p_2^{(1)},\quad
	z_2^{[12]} = p_1^{(1)},\quad
	w_2^{[12]} = \dfrac{1}{p_2^{(1)}}.
\end{align*}
%
%
In the last, we consider the accessible singularity $P_1^1$.
There exist two transversal leaves corresponding to solutions to system \eqref{eq:blow1st_2} passing through $P_1^1$ at any $t=t_0$.
They are expressed in terms of formal power series as
\begin{align*}
	&u_{12}' = \frac{\alpha_2-\alpha_5+\eta}{t_0-1}(t-t_0) + a_2(t-t_0)^2 + O\left((t-t_0)^3\right),\\
	&u_{22}' = 1 - \frac{\alpha_2}{2t_0} + b_2(t-t_0)^2 + O\left((t-t_0)^3\right),\\
	&v_{12}' = \frac{1}{2t_0}(t-t_0) + c_2(t-t_0)^2 + O\left((t-t_0)^3\right),\\
	&v_{22}' = -1 + d_2(t-t_0)^2 + O\left((t-t_0)^3\right)
\end{align*}
and
\begin{align*}
	&u_{12}' = -\frac{\alpha_2-\alpha_5+\eta}{t_0-1}(t-t_0) + a_2(t-t_0)^2 + O\left((t-t_0)^3\right),\\
	&u_{22}' = 1 - \frac{\alpha_2}{t_0} + b_2(t-t_0)^2 + O\left((t-t_0)^3\right),\\
	&v_{12}' = \frac{1}{t_0}(t-t_0) + c_2(t-t_0)^2 + O\left((t-t_0)^3\right),\\
	&v_{22}' = -1 - \frac{\alpha_2-\alpha_5+\eta}{t_0}(t-t_0) + d_2(t-t_0)^2 + O\left((t-t_0)^3\right).
\end{align*}
Here we denote the function $f(t)$ satisfying $\displaystyle\lim_{t\to t_0}\left|\frac{f(t)}{(t-t_0)^k}\right|<\infty$ by $O\left((t-t_0)^k\right)$.
Two coefficients among $a_2,b_2,c_2,d_2$ are not determined and turn to be arbitrary complex numbers.
Since the former solutions passes through the accessible singularity $C_1^{(01)}$, we have to give coordinates which separate the latter solutions.
By using the method which will be given in Section \ref{sec:resolve_P}, we obtain two coordinates 
\begin{align*}
	&\left(\widehat{x}_3^{[01]},\widehat{y}_3^{[01]},\widehat{z}_3^{[01]},\widehat{w}_3^{[01]}\right) = r_2\left(x_3^{[01]},y_3^{[01]},z_3^{[01]},w_3^{[01]}\right),\\
	&\left(\widehat{x}_3^{[02]},\widehat{y}_3^{[02]},\widehat{z}_3^{[02]},\widehat{w}_3^{[02]}\right) = r_2\left(x_3^{[02]},y_3^{[02]},z_3^{[02]},w_3^{[02]}\right).
\end{align*}
We do not state its detail here.

\subsection{Formal power series}\label{subsec:power_C}

We consider $C_2^{(12)}$ of $C_1^0$ in this subsection.
We can prove for $C_1^{(01)}$, $C_3^{(01)}$ and $C_3^{(02)}$ in a similar manner.
The system $\mathcal{H}$ on $W_{12}\times B$ is described as
\begin{equation}\label{eq:W12}\begin{split}
	&t(t-1)p_2^{(12)}dq_1^{(1)} - F_1^{(12)}dt = 0,\\
	&t(t-1)p_2^{(12)}dq_2^{(1)} - F_2^{(12)}dt = 0,\\
	&t(t-1)p_2^{(12)}dp_1^{(12)} - G_1^{(12)}dt = 0,\\
	&t(t-1)dp_2^{(12)} - G_2^{(12)}dt = 0,
\end{split}\end{equation}
where
\begin{align*}
	F_1^{(12)}
	&= \left[(\alpha_5+\eta)t\left(q_1^{(1)}\right)^2+\alpha_3q_1^{(1)}q_2^{(1)}-\left\{(\alpha_0+\alpha_5+\eta)t-\alpha_0-\alpha_1+\eta\right\}q_1^{(1)}-\alpha_3q_2^{(1)}-\alpha _1+\eta\right]p_2^{(12)}\\
	&\quad +2q_1^{(1)}\left(q_1^{(1)}-1\right)\left(tq_1^{(1)}-1\right)p_1^{(12)} + \left(q_1^{(1)}-1\right)\left(tq_1^{(1)}+q_2^{(1)}\right)\left(q_2^{(1)}-1\right),\\
	F_2^{(12)}
	&= \left[\alpha_5tq_1^{(1)}q_2^{(1)}+(\alpha_3+\eta)\left(q_2^{(1)}\right)^2-\alpha_5tq_1^{(1)}+\{(\alpha_1+\alpha_2-\eta)t-\alpha_2-\alpha_3-\eta\}q_2^{(1)}-(\alpha_1-\eta)t\right]p_2^{(12)}\\
	&\quad +\left(q_1^{(1)}-1\right)\left(t q_1^{(1)}+q_2^{(1)}\right)\left(q_2^{(1)}-1\right)p_1^{(12)} + 2q_2^{(1)}\left(q_2^{(1)}-1\right)\left(q_2^{(1)}-t\right),\\
	G_1^{(12)}
	&= \eta\left(\alpha_3p_1^{(12)}-\alpha_5t\right)\left(p_2^{(12)}\right)^2 + \Big[\alpha_3\left(q_1^{(1)}-1\right)\left(p_1^{(12)}\right)^2\\
	&\quad +\left\{(\alpha_3+2\eta)q_2^{(1)}-(\alpha_5+2\eta)tq_1^{(1)}+(-\alpha _3-\alpha_4+1)t+\alpha _4+\alpha_5-1\right\}p_1^{(12)}-\alpha_5tq_2^{(1)}+\alpha_5t\Big]p_2^{(12)}\\
	&\quad -\left\{2t\left(q_1^{(1)}\right)^2-2q_1^{(1)}q_2^{(1)}-(t+1)q_1^{(1)}+2q_2^{(1)}\right\}\left(p_1^{(12)}\right)^2\\
	&\quad -\left\{2tq_1^{(1)}q_2^{(1)}-2\left(q_2^{(1)}\right)^2-2tq_1^{(1)}+(t+1)q_2^{(1)}\right\}p_1^{(12)},\\
	G_2^{(12)}
	&= \alpha_3\eta\left(p_2^{(12)}\right)^2\\
	&\quad + \left[\alpha_3\left(q_1^{(1)}-1\right)p_1^{(12)}+\alpha_5tq_1^{(1)}+2(\alpha _3+\eta)q_2^{(1)}+\left(\alpha_1+\alpha_2-\eta\right)t-\alpha_2-\alpha_3-\eta\right]p_2^{(12)}\\
	&\quad +\left(q_1^{(1)}-1\right)\left(tq_1^{(1)}+2q_2^{(1)}-1\right)p_1^{(12)}+3\left(q_2^{(1)}\right)^2 -2(t+1)q_2^{(1)} + t.
\end{align*}

We fix a point $t_0\in B$ and consider the solution to system \eqref{eq:W12} with the initial condition
\[
	\left(q_1^{(1)}(t_0),q_2^{(1)}(t_0),p_1^{(12)}(t_0),p_2^{(12)}(t_0)\right) = (a_0,1,0,0),
\]
where $a_0\in\mathbb{C}$.
This solution is expressed in terms of formal power series as
\begin{equation}\label{eq:C_power}\begin{split}
	&q_1^{(1)} = a_0 + (t-t_0)^{\lambda_a}\sum_{k=1}^{\infty}a_k(t-t_0)^k,\quad
	q_2^{(1)} = 1 + (t-t_0)^{\lambda_b}\sum_{k=1}^{\infty}b_k(t-t_0)^k,\\
	&p_1^{(12)} = (t-t_0)^{\lambda_c}\sum_{k=1}^{\infty}c_k(t-t_0)^k,\quad
	p_2^{(12)} = (t-t_0)^{\lambda_d}\sum_{k=1}^{\infty}d_k(t-t_0)^k,
\end{split}\end{equation}
where $\lambda_a,a_k,\lambda_b,b_k,\lambda_c,c_k,\lambda_d,d_k\in\mathbb{C}$ and
\[
	{\rm{Re}}(\lambda_a) > -1,\quad
	a_1 \neq 0,\quad
	{\rm{Re}}(\lambda_b) > -1,\quad
	b_1 \neq 0,\quad
	{\rm{Re}}(\lambda_c) > -1,\quad
	c_1 \neq 0,\quad
	{\rm{Re}}(\lambda_d) > -1,\quad
	d_1 \neq 0.
\]
Substituting solution \eqref{eq:C_power} to the fourth equation of system \eqref{eq:W12}, we obtain
\[
	t_0(t_0-1)(\lambda_d+1)d_1(t-t_0)^{\lambda_d} + O\left((t-t_0)^{\lambda_d+1}\right) = -t_0 + 1 + o(1).
\]
Here we denote the function $f(t)$ satisfying $\displaystyle\lim_{t\to t_0}\left|\frac{f(t)}{(t-t_0)^k}\right|=0$ by $o\left((t-t_0)^k\right)$.
It follows that
\begin{equation}\label{eq:C_power_coef_c}
	\lambda_d = 0,\quad
	d_1 = -\frac{1}{t_0}.
\end{equation}
Similarly we obtain 
\begin{equation}\label{eq:C_power_coef_a}
	\lambda_b = 0,\quad
	b_1 = -\frac{\alpha_2}{t_0}
\end{equation}
from the second equation of system \eqref{eq:W12},
\begin{equation}\label{eq:C_power_coef_b}
	\lambda_a = 0
\end{equation}
from the first equation and
\begin{equation}\label{eq:C_power_coef_d}
	\lambda_c = 0
\end{equation}
from the third equation.
Under conditions \eqref{eq:C_power_coef_c}, \eqref{eq:C_power_coef_a}, \eqref{eq:C_power_coef_b} and \eqref{eq:C_power_coef_d}, we also obtain
\[
	a_1 = -\frac{2a_0(a_0-1)(t_0a_0-1)}{t_0-1}c_1 + A_1,
\]
where $A_1\in\mathbb{C}(t_0)[\alpha_0,\ldots,\alpha_5,\eta,a_0]$ from the second equation.
Then we see that $c_1$ turns to be an arbitrary complex number and $a_1$ is determined uniquely by $c_1$.
Moreover, in a similar manner, $c_2$ and $d_2$ are determined uniquely as elements of $\mathbb{C}(t_0)[\alpha_0,\ldots,\alpha_5,\eta,a_0,c_1]$ and we obtain
\[
	a_2 = -\frac{(a_0-1)(t_0a_0+1)}{2(t_0-1)}b_2 + A_2,
\]
where $A_2\in\mathbb{C}(t_0)[\alpha_0,\ldots,\alpha_5,\eta,a_0,c_1]$.
Then we see that $b_2$ turns to be an arbitrary complex number and $a_2$ is determined uniquely by $b_2$.

By using solution \eqref{eq:C_power} with \eqref{eq:C_power_coef_c}, \eqref{eq:C_power_coef_a}, \eqref{eq:C_power_coef_b} and \eqref{eq:C_power_coef_d}, we have
\begin{align*}
	\frac{q_2^{(1)}-1}{p_2^{(12)}} 
	&= \frac{-\alpha_2t_0^{-1}(t-t_0)+b_2(t-t_0)^2+O\left((t-t_0)^3\right)}{-t_0^{-1}(t-t_0)+d_2(t-t_0)^2+O\left((t-t_0)^3\right)}\\
	&= \frac{-\alpha_2t_0^{-1}+b_2(t-t_0)+O\left((t-t_0)^2\right)}{-t_0^{-1}+d_2(t-t_0)+O\left((t-t_0)^2\right)}\\
	&= \alpha_2 + \frac{-t_0^{-1}b_2+\alpha_2t_0^{-1}d_2}{t_0^{-2}}(t-t_0) + O\left((t-t_0)^2\right)\\
	&= \alpha_2 + t_0(-b_2+\alpha_2d_2)(t-t_0) + O\left((t-t_0)^2\right),
\end{align*}
from which we obtain
\begin{align*}
	\frac{\alpha_2-\frac{q_2^{(1)}-1}{p_2^{(12)}}}{p_2^{(12)}}
	&= \frac{t_0(b_2-\alpha_2d_2)(t-t_0)+O\left((t-t_0)^2\right)}{-t_0^{-1}(t-t_0)+O\left((t-t_0)^2\right)}\\
	&= \widetilde{b}_0 + O(t-t_0),
\end{align*}
where $\widetilde{b}_0=-t_0^2(b_2-\alpha_2d_2)$.
We also have
\[
	\frac{p_1^{(12)}}{p_2^{(12)}} = \widetilde{c}_0 + O(t-t_0),
\]
where $\widetilde{c}_0=-t_0c_1$ in a similar manner.
Set
\[
	x_2^{[12]} = q_1^{(1)},\quad
	y_2^{[12]} = \frac{\alpha_2-\frac{q_2^{(1)}-1}{p_2^{(12)}}}{p_2^{(12)}},\quad
	z_2^{[12]} = \frac{p_1^{(12)}}{p_2^{(12)}},\quad
	w_2^{[12]} = p_2^{(12)}.
\]
Then we obtain
\[
	\left(x_2^{[12]}(t_0),y_2^{[12]}(t_0),z_2^{[12]}(t_0),w_2^{[12]}(t_0)\right) = \left(a_0,\widetilde{b}_0,\widetilde{c}_0,0\right).
\]

In the last, we consider the case that the coefficients $\widetilde{b}_0$ and $\widetilde{c}_0$ tend to infinity.
Set
\[
	x = q_1^{(1)},\quad
	y = -\frac{\frac{q_2^{(1)}-1}{p_1^{(12)}}-\alpha_2\frac{p_2^{(12)}}{p_1^{(12)}}}{p_2^{(12)}},\quad
	z = p_1^{(12)},\quad
	w = \frac{p_2^{(12)}}{p_1^{(12)}}
\]
and
\[
	x' = q_1^{(1)},\quad
	y' = \alpha_2-\frac{q_2^{(1)}-1}{p_2^{(12)}},\quad
	z' = \frac{p_1^{(12)}}{\alpha_2-\frac{q_2^{(1)}-1}{p_2^{(12)}}},\quad
	w' = \frac{p_2^{(12)}}{\alpha_2-\frac{q_2^{(1)}-1}{p_2^{(12)}}}.
\]
By using solution \eqref{eq:C_power} with \eqref{eq:C_power_coef_c}, \eqref{eq:C_power_coef_a}, \eqref{eq:C_power_coef_b} and \eqref{eq:C_power_coef_d}, we have
\begin{align*}
	&x = a_0 + O(t-t_0),\quad
	y = \frac{\widetilde{b}_0}{\widetilde{c}_0} + O(t-t_0),\\
	&z = O(t-t_0),\quad
	w = \frac{1}{\widetilde{c}_0} + O(t-t_0)
\end{align*}
and
\begin{align*}
	&x' = a_0 + O(t-t_0),\quad
	y' = O(t-t_0),\\
	&z' = \frac{\widetilde{c}_0}{\widetilde{b}_0} + O(t-t_0),\quad
	w' = \frac{1}{\widetilde{b}_0} + O(t-t_0).
\end{align*}
Hence we obtain
\[
	(x(t_0),y(t_0),z(t_0),w(t_0)) = \left(a_0,\frac{\widetilde{b}_0}{\widetilde{c}_0},0,\frac{1}{\widetilde{c}_0}\right)
\]
and
\[
	(x'(t_0),y'(t_0),z'(t_0),w'(t_0)) = \left(a_0,0,\frac{\widetilde{c}_0}{\widetilde{b}_0},\frac{1}{\widetilde{b}_0}\right).
\]
On the other hand, the systems on new coordinates are described as
\begin{align*}
	&t(t-1)wdx - Xdt  = 0,\\
	&t(t-1)wdy - Ydt = 0,\\
	&t(t-1)wdz - Zdt = 0,\\
	&t(t-1)dw - Wdt = 0,
\end{align*}
where $X, Y, Z, W\in\mathbb{C}[x,y,z,w,t]$ and
\begin{align*}
	&t(t-1)w'dx' - X'dt  = 0,\\
	&t(t-1)w'dy' - Y'dt = 0,\\
	&t(t-1)w'dz' - Z'dt = 0,\\
	&t(t-1)dw' - W'dt = 0,
\end{align*}
where $X', Y', Z', W'\in\mathbb{C}\left[x',y',z',w',t\right]$.
Then all of the leaves corresponding to the solutions to these systems with the initial conditions
\[
	\left(x(t_0),y(t_0),z(t_0),w(t_0)\right) = \left(a_0,\widehat{b}_0,0,0\right),
\]
where $\widehat{b}_0\in\mathbb{C}$ and
\[
	\left(x'(t_0),y'(t_0),z'(t_0),w'(t_0)\right) = (a_0,0,\widehat{c}_0,0),
\]
where $\widehat{c}_0\in\mathbb{C}$ are both vertical leaves.
We do not state their detail here. 

\section{Resolving the accessible singularities $P^0$}\label{sec:resolve_P}

It is not suitable to use the blowing up method to resolve the accessible singularity $P^0$, because the explicit formula of the system after blowing up becomes more complicated.
Therefore we use the method in terms of formal power series and the symmetry of system \eqref{eq:FST} in this section.
We give the result of this section in advance.

\begin{prop}\label{prop:resolve_P}
Coordinates resolving the accessible singularity $P^0$ are given as follows.
\begin{align*}
	&P_3^{(01)}: \left(\widetilde{x}_3^{[01]},\widetilde{y}_3^{[01]},\widetilde{z}_3^{[01]},\widetilde{w}_3^{[01]}\right) = r_3\left(x_3^{[01]},y_3^{[01]},z_3^{[01]},w_3^{[01]}\right).\\
	&P_4^{(01)}: \left(\widetilde{x}_4^{[01]},\widetilde{y}_4^{[01]},\widetilde{z}_4^{[01]},\widetilde{w}_4^{[01]}\right) = r_4r_3\left(x_3^{[01]},y_3^{[01]},z_3^{[01]},w_3^{[01]}\right).\\
	&P_6^{(01)}: \left(\widetilde{x}_6^{[01]},\widetilde{y}_6^{[01]},\widetilde{z}_6^{[01]},\widetilde{w}_6^{[01]}\right) = r_4\left(x_2^{[01]},y_2^{[01]},z_2^{[01]},w_2^{[01]}\right).\\
	&P_7^{(01)}: \left(\widetilde{x}_7^{[01]},\widetilde{y}_7^{[01]},\widetilde{z}_7^{[01]},\widetilde{w}_7^{[01]}\right) = r_5r_4\left(x_2^{[01]},y_2^{[01]},z_2^{[01]},w_2^{[01]}\right).\\
	&P_3^{(12)}: \left(\widetilde{x}_3^{[12]},\widetilde{y}_3^{[12]},\widetilde{z}_3^{[12]},\widetilde{w}_3^{[12]}\right) = r_5\left(x_3^{[12]},y_3^{[12]},z_3^{[12]},w_3^{[12]}\right).\\
	&P_4^{(12)}: \left(\widetilde{x}_4^{[12]},\widetilde{y}_4^{[12]},\widetilde{z}_4^{[12]},\widetilde{w}_4^{[12]}\right) = r_0r_5\left(x_3^{[12]},y_3^{[12]},z_3^{[12]},w_3^{[12]}\right).\\
	&P_6^{(12)}: \left(\widetilde{x}_6^{[12]},\widetilde{y}_6^{[12]},\widetilde{z}_6^{[12]},\widetilde{w}_6^{[12]}\right) = r_0\left(x_2^{[12]},y_2^{[12]},z_2^{[12]},w_2^{[12]}\right).\\
	&P_7^{(12)}: \left(\widetilde{x}_7^{[12]},\widetilde{y}_7^{[12]},\widetilde{z}_7^{[12]},\widetilde{w}_7^{[12]}\right) = r_1r_0\left(x_2^{[12]},y_2^{[12]},z_2^{[12]},w_2^{[12]}\right).\\
	&P_3^{(22)}: \left(\widetilde{x}_3^{[22]},\widetilde{y}_3^{[22]},\widetilde{z}_3^{[22]},\widetilde{w}_3^{[22]}\right) = r_1\left(x_3^{[22]},y_3^{[22]},z_3^{[22]},w_3^{[22]}\right).\\
	&P_4^{(22)}: \left(\widetilde{x}_4^{[22]},\widetilde{y}_4^{[22]},\widetilde{z}_4^{[22]},\widetilde{w}_4^{[22]}\right) = r_2r_1\left(x_3^{[22]},y_3^{[22]},z_3^{[22]},w_3^{[22]}\right).\\
	&P_6^{(22)}: \left(\widetilde{x}_6^{[22]},\widetilde{y}_6^{[22]},\widetilde{z}_6^{[22]},\widetilde{w}_6^{[22]}\right) = r_2\left(x_2^{[22]},y_2^{[22]},z_2^{[22]},w_2^{[22]}\right).\\
	&P_7^{(22)}: \left(\widetilde{x}_7^{[22]},\widetilde{y}_7^{[22]},\widetilde{z}_7^{[22]},\widetilde{w}_7^{[22]}\right) = r_3r_2\left(x_2^{[22]},y_2^{[22]},z_2^{[22]},w_2^{[22]}\right).
\end{align*}
Namely, the system on each coordinate $\left(\widetilde{x}_j^{[kl]},\widetilde{y}_j^{[kl]},\widetilde{z}_j^{[kl]},\widetilde{w}_j^{[kl]}\right)$ is described as a polynomial Hamiltonian system.
\end{prop}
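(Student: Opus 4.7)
The plan is to reduce the resolution of each $P_j^{(kl)}$ to the already-settled case of a $C$-type singularity by means of the Weyl group symmetry. The key observation is that every generator $r_0,\ldots,r_5$ is a birational \emph{symplectic} automorphism of the Hamiltonian system $\mathcal{H}$ (up to a permutation of parameters), so each $r_i$ permutes the accessible singularities and preserves the polynomial Hamiltonian form on any chart on which such a form is known to exist. For each $P$-type singularity in the list, the proposition proposes a specific composition of the $r_i$ sending it to a $C$-type singularity, and pulls back the resolving chart from Proposition \ref{prop:resolve_C} through that composition.

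First I would verify the indicated correspondences between singularities. For instance, for $P_3^{(01)}=\{q_1=q_2-t=p_1^{(01)}=p_2^{(01)}=0\}$, using the explicit formula for $r_3$, one substitutes $r_3(q_i),r_3(p_i)$ into the defining ideal of $C_3^{(01)}=\{q_1=q_2=p_1=0\}$ and checks that its zero locus, expressed back in $(q_1,q_2,p_1^{(01)},p_2^{(01)},t)$, coincides with $P_3^{(01)}$. The same check is made for each of the twelve cases, using the fact (visible from the action table) that $r_3$ alters only parameters $\alpha_2,\alpha_3,\alpha_4,\eta$ and the variable $q_2$, so that the shift $q_2 \mapsto q_2+\alpha_3/p_2$ carries $q_2=0$ to $q_2-t=0$ on the indicated stratum, etc. Analogous checks handle the compositions $r_4r_3,\ r_4,\ r_5r_4$ on the $(01)$ chart and, by the $\pi_2$-symmetry implemented through $\Psi_1,\Psi_2$ (cf. Proposition \ref{prop:Wi0_ham}), the shifted compositions $r_5,r_0r_5,r_0,r_1r_0$ on the $(12)$ chart and $r_1,r_2r_1,r_2,r_3r_2$ on the $(22)$ chart.

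With the set-theoretic correspondence in hand, the proof concludes in one stroke: since $r_i$ is symplectic and sends an open neighbourhood of $P_j^{(kl)}$ biholomorphically onto an open neighbourhood of the corresponding $C_{j'}^{(k'l')}$, the pull-back of $(x_{j'}^{[k'l']},y_{j'}^{[k'l']},z_{j'}^{[k'l']},w_{j'}^{[k'l']})$ yields canonical coordinates on the resolved neighbourhood of $P_j^{(kl)}$. The system in the pulled-back coordinates is, by Proposition \ref{prop:resolve_C}, polynomial Hamiltonian in $(x_{j'}^{[k'l']},y_{j'}^{[k'l']},z_{j'}^{[k'l']},w_{j'}^{[k'l']})$ for the shifted parameter $r_i(\alpha_0,\ldots,\alpha_5,\eta)$; since the $r_i$-shift acts only on parameters, the polynomial Hamiltonian form survives. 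As a sanity cross-check I would redo one or two cases by the formal power series method of Section \ref{subsec:power_C}, expanding a solution that passes through $P_j^{(kl)}$, identifying the two free parameters, and confirming that the coordinates so obtained agree with the Weyl-transformed formulas proposed here.

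The main obstacle is the first step: carefully verifying, for each of the twelve $P$-type singularities, that the prescribed word in the $r_i$ actually carries it to the indicated $C$-type singularity. This is a finite but tedious computation, complicated by the fact that some of the $r_i$ (notably $r_5$, $r_0$ on the $(12)$ chart, and $r_2$ on the $(22)$ chart) act non-trivially on several variables simultaneously and involve rational expressions in $q_1p_1+q_2p_2+\eta$; here one must take care that the relevant denominators are non-vanishing on the stratum considered, so that the image of the singular locus is computed correctly. Once those case-by-case verifications are done, the Hamiltonian and symplecticity assertions follow formally from Proposition \ref{prop:resolve_C} and the symmetry of $\mathcal{H}$ under the $r_i$.
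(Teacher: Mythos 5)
Your overall strategy coincides with the paper's: use a short word in the $r_i$ to carry each $P_j^{(kl)}$ onto a $C$-type singularity and pull back the resolving chart of Proposition \ref{prop:resolve_C}, invoking the invariance of system \eqref{eq:FST} under the $r_i$ to conclude that the transported system is again polynomial Hamiltonian up to a shift of parameters. That part of your argument matches the paper and is fine.

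The gap is in your first step, the verification that (for instance) $r_3$ carries $P_3^{(01)}$ to $C_3^{(01)}$. You propose to ``substitute $r_3(q_i),r_3(p_i)$ into the defining ideal of $C_3^{(01)}$ and check that its zero locus coincides with $P_3^{(01)}$,'' with a caveat about denominators not vanishing on the stratum. But on $P_3^{(01)}=\{q_1=q_2-t=p_1^{(01)}=p_2^{(01)}=0\}$ one has $p_2=p_2^{(01)}/p_1^{(01)}$, an indeterminate $0/0$, so $r_3(q_2)=q_2+\alpha_3/p_2=q_2+\alpha_3\,p_1^{(01)}/p_2^{(01)}$ has no well-defined value on the locus: the birational map $r_3$ is indeterminate exactly there, and a set-theoretic image computation cannot be carried out. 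This is why the paper does \emph{not} treat the formal power series as a sanity check but as the essential mechanism: the leading coefficients $c_1=1/(t_0-1)$ and $d_1=-\alpha_3/(t_0(t_0-1))$ of $p_1^{(01)}$ and $p_2^{(01)}$ are forced by the differential equations, so that along every solution through $P_3^{(01)}$ one has $\alpha_3 c_1/d_1=-t_0$ and hence $\widetilde{q}_2\to t_0-t_0=0$; only this dynamical input places the transformed trajectory on $\{\widetilde q_1=\widetilde q_2=\widetilde p_1^{(01)}=0\}$. The same expansion also shows that the two free constants $a_2,d_2$ of the solution family reappear in the initial values of $\bigl(\widetilde{x}_3^{[01]},\widetilde{y}_3^{[01]}\bigr)$, which is what certifies that the pulled-back chart actually separates the solutions through $P_3^{(01)}$. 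Promote your ``cross-check'' to the main argument (for each of the twelve cases, or at least for one representative with the observation that the others are analogous) and the proof closes.
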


\subsection{Proof of Proposition \ref{prop:resolve_P}}\label{subsec:resolve_P}

Because we can prove in the same way for all of the accessible singularities, we consider only $P_3^{(01)}=\left\{q_1=q_2-t=p_1^{(01)}=p_2^{(01)}=0\right\}$ here.
We fix a point $t_0\in B$ and consider the solution to system \eqref{eq:W01_1} with the initial condition 
\begin{equation}\label{eq:initial_P3}
	\left(q_1(t_0),q_2(t_0),p_1^{(01)}(t_0),p_2^{(01)}(t_0)\right) = (0,t_0,0,0).
\end{equation}
This solution is expressed in terms of formal power series as
\begin{equation}\label{eq:P_power}\begin{split}
	&q_1 = (t-t_0)^{\lambda_a}\sum_{k=1}^{\infty}a_k(t-t_0)^k,\quad
	q_2 = t_0 + (t-t_0)^{\lambda_b}\sum_{k=1}^{\infty}b_k(t-t_0)^k,\\
	&p_1^{(01)} = (t-t_0)^{\lambda_c}\sum_{k=1}^{\infty}c_k(t-t_0)^k,\quad
	p_2^{(01)} = (t-t_0)^{\lambda_d}\sum_{k=1}^{\infty}d_k(t-t_0)^k,
\end{split}\end{equation}
where $\lambda_a,a_k,\lambda_b,b_k,\lambda_c,c_k,\lambda_d,d_k\in\mathbb{C}$ and
\[
	{\rm{Re}}(\lambda_a) > -1,\quad
	a_1 \neq 0,\quad
	{\rm{Re}}(\lambda_b) > -1,\quad
	b_1 \neq 0,\quad
	{\rm{Re}}(\lambda_c) > -1,\quad
	c_1 \neq 0,\quad
	{\rm{Re}}(\lambda_d) > -1,\quad
	d_1 \neq 0.
\]
Substituting solution \eqref{eq:P_power} to the third equation of system \eqref{eq:W01_1}, we obtain
\[
	t_0(t_0-1)(\lambda_c+1)c_1(t-t_0)^{\lambda_c} + O\left((t-t_0)^{\lambda_c+1}\right) = t_0 + o(1).
\]
It follows that
\begin{equation}\label{eq:P_power_coef_c}
	\lambda_c = 0,\quad
	c_1 = \frac{1}{t_0-1}.
\end{equation}
Similarly we obtain 
\begin{equation}\label{eq:P_power_coef_d}
	\lambda_d = 0,\quad
	d_1 = \dfrac{-\alpha_3}{t_0(t_0-1)}
\end{equation}
from the fourth equation of system \eqref{eq:W01_1},
\begin{equation}\label{eq:P_power_coef_a}
	\lambda_a = 0,\quad
	a_1 = \dfrac{\alpha_3+\alpha_5-\eta}{t_0-1}
\end{equation}
from the first equation and
\begin{equation}\label{eq:P_power_coef_b}
	\lambda_b = 0,\quad
	b_1 = 1 - \dfrac{\alpha_4}{2}
\end{equation}
from the second equation.
Moreover, in a similar manner, $b_2$ and $c_2$ are determined uniquely as elements of $\mathbb{C}(t_0)[\alpha_0,\ldots,\alpha_5,\eta]$ and 
$a_2$ and $d_2$ turn to be arbitrary complex numbers.

We consider the action of the transformation $r_3$ on the above solution.
Set
\[
	\widetilde{q}_1 = r_3(q_1),\quad
	\widetilde{q}_2 = r_3(q_2),\quad
	\widetilde{p}_1 = r_3(p_1),\quad
	\widetilde{p}_2 = r_3(p_2),\quad
	\widetilde{p}_1^{(01)} = r_3\left(p_1^{(01)}\right),\quad
	\widetilde{p}_2^{(01)} = r_3\left(p_2^{(01)}\right).
\]
Then relation \eqref{eq:rel_p} implies
\begin{equation}\label{eq:tilde_01}\begin{split}
	&\widetilde{q}_1 = q_1,\\
	&\widetilde{q}_2 = q_2 + \frac{\alpha_3}{p_2} = q_2 + \alpha_3\frac{p_1^{(01)}}{p_2^{(01)}},\\
	&\widetilde{p}_1^{(01)} = r_3\left(\frac{1}{p_1}\right) = \frac{1}{p_1} = p_1^{(01)},\\
	&\widetilde{p}_2^{(01)} = r_3\left(\frac{p_2}{p_1}\right) = \frac{p_2}{p_1} = p_2^{(01)}.
\end{split}\end{equation}
Substituting solution \eqref{eq:P_power} with \eqref{eq:P_power_coef_c}, \eqref{eq:P_power_coef_d}, \eqref{eq:P_power_coef_a} and \eqref{eq:P_power_coef_b} to relation \eqref{eq:tilde_01}, we obtain
\[
	\widetilde{q}_1 = O(t-t_0),\quad
	\widetilde{p}_1^{(01)} = O(t-t_0),\quad
	\widetilde{p}_2^{(01)} = O(t-t_0)
\]
and
\begin{align*}
	\widetilde{q}_2
	&= \left(t_0+O(t-t_0)\right) + \alpha_3\frac{c_1(t-t_0)+O\left((t-t_0)^2\right)}{d_1(t-t_0)+O\left((t-t_0)^2\right)}\\
	&= \left(t_0+\alpha_3\frac{c_1}{d_1}\right) + O(t-t_0)\\
	&= O(t-t_0).
\end{align*}
Hence the solution to system \eqref{eq:W01_1} with initial condition \eqref{eq:initial_P3} is transformed to that to the system on the coordinate $\left(\widetilde{q}_1,\widetilde{q}_2,\widetilde{p}_1^{(01)},\widetilde{p}_2^{(01)}\right)$ whose initial condition is given by
\begin{equation}\label{eq:P3_r3}
	\left(\widetilde{q}_1(t_0),\widetilde{q}_2(t_0),\widetilde{p}_1^{(01)}(t_0),\widetilde{p}_2^{(01)}(t_0)\right) = (0,0,0,0).
\end{equation}
Since system \eqref{eq:FST} is invariant under the transformation $r_3$, the system on $\left(\widetilde{q}_1,\widetilde{q}_2,\widetilde{p}_1,\widetilde{p}_2\right)$ is the same as system \eqref{eq:FST} except changing the parameters.
Hence the system on $\left(\widetilde{q}_1,\widetilde{q}_2,\widetilde{p}_1^{(01)},\widetilde{p}_2^{(01)}\right)$ is the same as system \eqref{eq:W01_1} except changing the parameters and has the accessible singularity
\begin{equation}\label{eq:C3_tilde}
	\left\{\widetilde{q}_1=\widetilde{q}_2=\widetilde{p}_1^{(01)}=0\right\}.
\end{equation}
It is obvious that point \eqref{eq:P3_r3} is contained in accessible singularity \eqref{eq:C3_tilde}.
Note that we have
\[
	\left\{\widetilde{q}_1=\widetilde{q}_2=\widetilde{p}_1^{(01)}=0,\ \widetilde{p}_2^{(01)}\neq0\right\} = \left\{q_1=q_2=p_1^{(01)}=0,\ p_2^{(01)}\neq0\right\}.
\]
On the other hand, we have given the coordinate resolving 
\[
	C_3^{(01)} = \left\{q_1=q_2=p_1^{(01)}=0\right\}
\]
in Proposition \ref{prop:resolve_C}.
Hence we can resolve the accessible singularity \eqref{eq:P3_r3} as
\begin{equation}\label{eq:resolve_Ctil301}
	\left(\widetilde{x}_3^{[01]},\widetilde{y}_3^{[01]},\widetilde{z}_3^{[01]},\widetilde{w}_3^{[01]}\right) = \left(-(\widetilde{q}_1\widetilde{p}_1+\widetilde{q}_2\widetilde{p}_2-\alpha_3-\alpha_5+\eta)\widetilde{p}_1,\widetilde{q}_2\widetilde{p}_1,\dfrac{1}{\widetilde{p}_1},\dfrac{\widetilde{p}_2}{\widetilde{p}_1}\right)
\end{equation}
and it is a symplectic transformation.
Note that the system on $\left(\widetilde{x}_3^{[01]},\widetilde{y}_3^{[01]},\widetilde{z}_3^{[01]},\widetilde{w}_3^{[01]}\right)$ is described as a polynomial Hamiltonian system.

In the last, we see the solution to the system on $\left(\widetilde{x}_3^{[01]},\widetilde{y}_3^{[01]},\widetilde{z}_3^{[01]},\widetilde{w}_3^{[01]}\right)$.
Relations \eqref{eq:tilde_01} and \eqref{eq:resolve_Ctil301} imply
\begin{equation}\label{eq:rel_XY_qp}\begin{split}
	\left(\widetilde{x}_3^{[01]},\widetilde{y}_3^{[01]},\widetilde{z}_3^{[01]},\widetilde{w}_3^{[01]}\right) 
	&= \left(\frac{-q_1+(\alpha_5-\eta)p_1^{(01)}-q_2p_2^{(01)}}{\left(p_1^{(01)}\right)^2},\frac{\alpha_3p_1^{(01)}+q_2p_2^{(01)}}{p_1^{(01)}p_2^{(01)}},p_1^{(01)},p_2^{(01)}\right).
\end{split}\end{equation}
Substituting solution \eqref{eq:P_power} with \eqref{eq:P_power_coef_c}, \eqref{eq:P_power_coef_d}, \eqref{eq:P_power_coef_a} and \eqref{eq:P_power_coef_b} to relation \eqref{eq:rel_XY_qp}, we obtain
\begin{align*}
	\widetilde{x}_3^{[01]}
	&= (A_1a_2+A_2d_2+A_3) + O(t-t_0),\\
	\widetilde{y}_3^{[01]}
	&= \frac{A_4d_2+A_5}{\alpha_3} + O(t-t_0),\\
	\widetilde{z}_3^{[01]}
	&= O(t-t_0),\\
	\widetilde{w}_3^{[01]}
	&= O(t-t_0),
\end{align*}
where $A_1,\ldots,A_5\in\mathbb{C}(t_0)[\alpha_0,\ldots,\alpha_5,\eta]$.
Hence the solution to system \eqref{eq:W01_1} with initial condition \eqref{eq:initial_P3} is transformed to that on $\left(\widetilde{x}_3^{[01]},\widetilde{y}_3^{[01]},\widetilde{z}_3^{[01]},\widetilde{w}_3^{[01]}\right) $ whose initial condition is given by
\[
	\left(\widetilde{x}_3^{[01]},\widetilde{y}_3^{[01]},\widetilde{z}_3^{[01]},\widetilde{w}_3^{[01]}\right) = \left(A_1a_2+A_2d_2+A_3,\frac{A_4d_2+A_5}{\alpha_3},0,0\right).
\]
We can see that the arbitrary complex numbers $a_2$ and $d_2$ in solution \eqref{eq:P_power} appear in this initial condition.

\section{Main result}\label{sec:main}

We summarize the facts obtained in the previous sections.
The system $\mathcal{H}$ has the singularity $D^0$ given in Proposition \ref{prop:sing}.
There exists the accessible singularity $C^0\cup P^0$ on $D^0$ given in \eqref{eq:as_list_tmp}.
All of leaves corresponding the solutions passing through the singularity $D^0\setminus\left(C^0\cup P^0\right)$ are vertical leaves.

Consider the first blowing up along the accessible singularity $C^0\cup P^0$ and denote the proper image of $D^0$ by the same symbol.
Then we obtain the exceptional divisor
\begin{align*}
	D^1 
	&= D_1^1 \cup D_2^1 \cup D_3^1 \cup r_0\left(D_1^1\right) \cup r_1r_0\left(D_1^1\right) \cup r_3\left(D_1^1\right) \cup r_4r_3\left(D_1^1\right)\\
	&\quad \cup r_5\left(D_2^1\right) \cup r_0r_5\left(D_2^1\right) \cup r_2\left(D_2^1\right) \cup r_3r_2\left(D_2^1\right) \cup r_4\left(D_3^1\right) \cup r_5r_4\left(D_3^1\right) \cup r_1\left(D_3^1\right) \cup r_2r_1\left(D_3^1\right).
\end{align*}
Here the component $D_1^1$ is given in \eqref{eq:exdivisor_1}.
The components $D_2^1$ and $D_3^1$ are obtained by the blowing up along $C_2^0$ and $C_3^0$ respectively in a similar manner as $D_1^1$.
Recall that the accessible singularities $C_2^0$ and $C_3^0$ are given in Lemma \ref{lem:C_connect}.
The rest 12 components are obtained by the actions of the transformations $r_0,\ldots,r_5$ as is seen in Section \ref{sec:resolve_P}.
The symbol $r_0\left(D_1^1\right)$ stands for a set given as follows.
If we choose a component $\left\{v_{11}=0\right\}$ of $D_1^1$, then we have
\[
	r_0\left(\{v_{11}=0\}\right) = \left\{r_0(v_{11})=0\right\} = \left\{r_0\left(p_1^{(01)}\right)=0\right\}.
\]
There exists the accessible singularity $C^1\cup P^1$ with
\begin{align*}
	C^1 
	&= C_1^1 \cup C_2^1 \cup C_3^1 \cup r_0\left(C_1^1\right) \cup r_1r_0\left(C_1^1\right) \cup r_3\left(C_1^1\right) \cup r_4r_3\left(C_1^1\right)\\
	&\quad \cup r_5\left(C_2^1\right) \cup r_0r_5\left(C_2^1\right) \cup r_2\left(C_2^1\right) \cup r_3r_2\left(C_2^1\right) \cup r_4\left(C_3^1\right) \cup r_5r_4\left(C_3^1\right) \cup r_1\left(C_3^1\right) \cup r_2r_1\left(C_3^1\right)
\end{align*}
and 
\[
	P^1 = P_1^1 \cup P_2^1 \cup P_3^1.
\]
Here the component $C_1^1\cup P_1^1$ is given in \eqref{eq:as_2nd}.
The components $C_2^1\cup P_2^1$ and $C_3^1\cup P_3^1$ are the accessible singularities on $D_2^1$ and $D_3^1$ respectively.
All of leaves corresponding the solutions passing through the singularity $D^1\setminus\left(C^1\cup P^1\right)$ are vertical leaves.

Consider the second blowing up along the accessible singularity $C^1\cup P^1$ and denote the proper image of $D^1$ by the same symbol.
Then we obtain the exceptional divisor
\begin{align*}
	D^2 
	&= D_1^2 \cup r_2\left(D_1^2\right) \cup D_2^2 \cup r_4\left(D_2^2\right) \cup D_3^2 \cup r_0\left(D_3^2\right) \cup r_0\left(D_1^2\right) \cup r_1r_0\left(D_1^2\right) \cup r_3\left(D_1^2\right) \cup r_4r_3\left(D_1^2\right)\\
	&\quad \cup r_5\left(D_2^2\right) \cup r_0r_5\left(D_2^2\right) \cup r_2\left(D_2^2\right) \cup r_3r_2\left(D_2^2\right) \cup r_4\left(D_3^2\right) \cup r_5r_4\left(D_3^2\right) \cup r_1\left(D_3^2\right) \cup r_2r_1\left(D_3^2\right).
\end{align*}
Here the component $D_1^2$ is given in \eqref{eq:exdivisor_2}.
The components $D_2^2$ and $D_3^2$ are obtained by the blowing up along $C_2^1$ and $C_3^1$ respectively in a similar manner as $D_1^2$.
The rest 15 components are obtained by the actions of the transformations $r_0,\ldots,r_5$.
There is no singularity on $D^2\setminus D^1$ and all of leaves corresponding the solutions passing through the singularity $D^2\cap D^1$ are vertical leaves.

\begin{figure}
	\begin{center}
	\begin{picture}(320,60)
		\put(0,7){\small$D^0$}
		\put(24,74){\small$D_1^1$}
		\put(7,55){\small$D_1^2$}
		\put(-10,35){\small$r_2\left(D_1^2\right)$}
		\put(89,74){\small$D_2^1$}
		\put(72,55){\small$D_2^2$}
		\put(57,35){\small$r_4\left(D_2^2\right)$}
		\put(155,74){\small$D_3^1$}
		\put(137,55){\small$D_3^2$}
		\put(122,35){\small$r_0\left(D_3^2\right)$}
		\put(210,74){\small$r_0\big(D_1^1\big)$}
		\put(187,55){\small$r_0\big(D_1^2\big)$}
		\put(310,74){\small$r_2r_1\big(D_3^1\big)$}
		\put(280,55){\small$r_2r_1\big(D_3^2\big)$}
		\put(247,47){\small$\cdots\cdots$}
		\put(15,10){\line(1,0){340}}
		\put(30,-1){\line(0,1){70}}
		\put(15,50){\line(1,0){30}}
		\put(15,30){\line(1,0){30}}
		\put(95,-1){\line(0,1){70}}
		\put(80,50){\line(1,0){30}}
		\put(80,30){\line(1,0){30}}
		\put(160,-1){\line(0,1){70}}
		\put(145,50){\line(1,0){30}}
		\put(145,30){\line(1,0){30}}
		\put(225,-1){\line(0,1){70}}
		\put(209,50){\line(1,0){30}}
		\put(325,-1){\line(0,1){70}}
		\put(310,50){\line(1,0){30}}
		\end{picture}
	\caption{Exceptional divisors}\label{Fig:exdivisor}
	\end{center}
\end{figure}

We illustrate the above blowing up procedure in Figure \ref{Fig:exdivisor}.
Consequently, a compact manifold $\overline{E}$ is derived from that $\Sigma_\eta\times B$.
Then we obtain an initial value space $E=\overline{E}\setminus\left(D^0\cup D^1\right)$ of system \eqref{eq:FST}.

\begin{thm}\label{thm:main}
The manifold $E$ is constructed by glueing 33 coordinates with relations
\begin{align*}
	\left(q_1^{(1)},q_2^{(1)},p_1^{(1)},p_2^{(1)}\right) 
	&= \left(\frac{1}{q_1},\frac{q_2}{q_1},-q_1(q_1p_1+q_2p_2+\eta),q_1p_2\right),\\
	\left(q_1^{(2)},q_2^{(2)},p_1^{(2)},p_2^{(2)}\right) 
	&= \left(\frac{q_1}{q_2},\frac{1}{q_2},q_2p_1,-q_2(q_1p_1+q_2p_2+\eta)\right),\\
	\left(x_1^{[01]},y_1^{[01]},z_1^{[01]},w_1^{[01]}\right) 
	&= \left(-\{(q_1-q_2)p_1-\alpha_2\}p_1,q_2,\dfrac{1}{p_1},p_1+p_2\right),\\
	\left(x_3^{[01]},y_3^{[01]},z_3^{[01]},w_3^{[01]}\right) 
	&= \left(-(q_1p_1+q_2p_2-\alpha_5+\eta)p_1,q_2p_1,\dfrac{1}{p_1},\dfrac{p_2}{p_1}\right),\\
	\left(x_3^{[02]},y_3^{[02]},z_3^{[02]},w_3^{[02]}\right) 
	&= \left(q_1p_2,-(q_1p_1+q_2p_2-\alpha_5+\eta)p_2,\dfrac{p_1}{p_2},\dfrac{1}{p_2}\right),\\
	\left(x_2^{[12]},y_2^{[12]},z_2^{[12]},w_2^{[12]}\right) 
	&= \left(q_1^{(1)},-\left\{\left(q_2^{(1)}-1\right)p_2^{(1)}-\alpha_2\right\}p_2^{(1)},p_1^{(1)},\dfrac{1}{p_2^{(1)}}\right),\\
	\left(x_1^{[11]},y_1^{[11]},z_1^{[11]},w_1^{[11]}\right) 
	&= \left(-\left\{\left(q_1^{(1)}-\dfrac{q_2^{(1)}}{t}\right)p_1^{(1)}-\alpha_4\right\}p_1^{(1)},q_2^{(1)},\dfrac{1}{p_1^{(1)}},p_2^{(1)}+\dfrac{p_1^{(1)}}{t}\right),\\ 
	\left(x_3^{[11]},y_3^{[11]},z_3^{[11]},w_3^{[11]}\right) 
	&= \left(-\left(q_1^{(1)}p_1^{(1)}+q_2^{(1)}p_2^{(1)}-\alpha_1+\eta\right)p_1^{(1)},q_2^{(1)}p_1^{(1)},\dfrac{1}{p_1^{(1)}},\dfrac{p_2^{(1)}}{p_1^{(1)}}\right),\\
	\left(x_3^{[12]},y_3^{[12]},z_3^{[12]},w_3^{[12]}\right) 
	&= \left(q_1^{(1)}p_2^{(1)},-\left(q_1^{(1)}p_1^{(1)}+q_2^{(1)}p_2^{(1)}-\alpha_1+\eta\right)p_2^{(1)},\dfrac{p_1^{(1)}}{p_2^{(1)}},\dfrac{1}{p_2^{(1)}}\right),\\
	\left(x_2^{[22]},y_2^{[22]},z_2^{[22]},w_2^{[22]}\right) 
	&= \left(q_1^{(2)},-\left\{\left(q_2^{(2)}-\frac{1}{t}\right)p_2^{(2)}-\alpha_4\right\}p_2^{(2)},p_1^{(2)},\frac{1}{p_2^{(2)}}\right),\\
	\left(x_2^{[01]},y_2^{[01]},z_2^{[01]},w_2^{[01]}\right) 
	&= \left(-\{(q_1-1)p_1-\alpha_0\}p_1,q_2,\dfrac{1}{p_1},p_2\right),\\
	\left(x_1^{[21]},y_1^{[21]},z_1^{[21]},w_1^{[21]}\right) 
	&= \left(-\left\{\left(q_1^{(2)}-q_2^{(2)}\right)p_1^{(2)}-\alpha_0\right\}p_1^{(2)},q_2^{(2)},\dfrac{1}{p_1^{(2)}},p_1^{(2)}+p_2^{(2)}\right),\\ 
	\left(x_3^{[21]},y_3^{[21]},z_3^{[21]},w_3^{[21]}\right) 
	&= \left(-\left(q_1^{(2)}p_1^{(2)}+q_2^{(2)}p_2^{(2)}-\alpha_3+\eta\right)p_1^{(2)},q_2^{(2)}p_1^{(2)},\dfrac{1}{p_1^{(2)}},\dfrac{p_2^{(2)}}{p_1^{(2)}}\right),\\
	\left(x_3^{[22]},y_3^{[22]},z_3^{[22]},w_3^{[22]}\right) 
	&= \left(q_1^{(2)}p_2^{(2)},-\left(q_1^{(2)}p_1^{(2)}+q_2^{(2)}p_2^{(2)}-\alpha_3+\eta\right)p_2^{(2)},\dfrac{p_1^{(2)}}{p_2^{(2)}},\dfrac{1}{p_2^{(2)}}\right),\\%
	\left(\widehat{x}_3^{[01]},\widehat{y}_3^{[01]},\widehat{z}_3^{[01]},\widehat{w}_3^{[01]}\right) 
	&= r_2\left(x_3^{[01]},y_3^{[01]},z_3^{[01]},w_3^{[01]}\right),\\
	\left(\widehat{x}_3^{[02]},\widehat{y}_3^{[02]},\widehat{z}_3^{[02]},\widehat{w}_3^{[02]}\right) 
	&= r_2\left(x_3^{[02]},y_3^{[02]},z_3^{[02]},w_3^{[02]}\right),\\
	\left(\widehat{x}_3^{[11]},\widehat{y}_3^{[11]},\widehat{z}_3^{[11]},\widehat{w}_3^{[11]}\right)
	&= r_4\left(x_3^{[11]},y_3^{[11]},z_3^{[11]},w_3^{[11]}\right),\\
	\left(\widehat{x}_3^{[12]},\widehat{y}_3^{[12]},\widehat{z}_3^{[12]},\widehat{w}_3^{[12]}\right) 
	&= r_4\left(x_3^{[12]},y_3^{[12]},z_3^{[12]},w_3^{[12]}\right),\\
	\left(\widehat{x}_3^{[21]},\widehat{y}_3^{[21]},\widehat{z}_3^{[21]},\widehat{w}_3^{[21]}\right) 
	&= r_2\left(x_3^{[21]},y_3^{[21]},z_3^{[21]},w_3^{[21]}\right),\\
	\left(\widehat{x}_3^{[22]},\widehat{y}_3^{[22]},\widehat{z}_3^{[22]},\widehat{w}_3^{[22]}\right)
	&= r_2\left(x_3^{[22]},y_3^{[22]},z_3^{[22]},w_3^{[22]}\right),\\%
	\left(\widetilde{x}_3^{[01]},\widetilde{y}_3^{[01]},\widetilde{z}_3^{[01]},\widetilde{w}_3^{[01]}\right) 
	&= r_3\left(x_3^{[01]},y_3^{[01]},z_3^{[01]},w_3^{[01]}\right),\\
	\left(\widetilde{x}_4^{[01]},\widetilde{y}_4^{[01]},\widetilde{z}_4^{[01]},\widetilde{w}_4^{[01]}\right)
	&= r_4r_3\left(x_3^{[01]},y_3^{[01]},z_3^{[01]},w_3^{[01]}\right),\\
	\left(\widetilde{x}_6^{[01]},\widetilde{y}_6^{[01]},\widetilde{z}_6^{[01]},\widetilde{w}_6^{[01]}\right) 
	&= r_4\left(x_2^{[01]},y_2^{[01]},z_2^{[01]},w_2^{[01]}\right),\\
	\left(\widetilde{x}_7^{[01]},\widetilde{y}_7^{[01]},\widetilde{z}_7^{[01]},\widetilde{w}_7^{[01]}\right)
	&= r_5r_4\left(x_2^{[01]},y_2^{[01]},z_2^{[01]},w_2^{[01]}\right),\\
	\left(\widetilde{x}_3^{[12]},\widetilde{y}_3^{[12]},\widetilde{z}_3^{[12]},\widetilde{w}_3^{[12]}\right) 
	&= r_5\left(x_3^{[12]},y_3^{[12]},z_3^{[12]},w_3^{[12]}\right),\\
	\left(\widetilde{x}_4^{[12]},\widetilde{y}_4^{[12]},\widetilde{z}_4^{[12]},\widetilde{w}_4^{[12]}\right) 
	&= r_0r_5\left(x_3^{[12]},y_3^{[12]},z_3^{[12]},w_3^{[12]}\right),\\
	\left(\widetilde{x}_6^{[12]},\widetilde{y}_6^{[12]},\widetilde{z}_6^{[12]},\widetilde{w}_6^{[12]}\right) 
	&= r_0\left(x_2^{[12]},y_2^{[12]},z_2^{[12]},w_2^{[12]}\right),\\
	\left(\widetilde{x}_7^{[12]},\widetilde{y}_7^{[12]},\widetilde{z}_7^{[12]},\widetilde{w}_7^{[12]}\right) 
	&= r_1r_0\left(x_2^{[12]},y_2^{[12]},z_2^{[12]},w_2^{[12]}\right),\\
	\left(\widetilde{x}_3^{[22]},\widetilde{y}_3^{[22]},\widetilde{z}_3^{[22]},\widetilde{w}_3^{[22]}\right) 
	&= r_1\left(x_3^{[22]},y_3^{[22]},z_3^{[22]},w_3^{[22]}\right),\\
	\left(\widetilde{x}_4^{[22]},\widetilde{y}_4^{[22]},\widetilde{z}_4^{[22]},\widetilde{w}_4^{[22]}\right) 
	&= r_2r_1\left(x_3^{[22]},y_3^{[22]},z_3^{[22]},w_3^{[22]}\right),\\
	\left(\widetilde{x}_6^{[22]},\widetilde{y}_6^{[22]},\widetilde{z}_6^{[22]},\widetilde{w}_6^{[22]}\right) 
	&= r_2\left(x_2^{[22]},y_2^{[22]},z_2^{[22]},w_2^{[22]}\right),\\
	\left(\widetilde{x}_7^{[22]},\widetilde{y}_7^{[22]},\widetilde{z}_7^{[22]},\widetilde{w}_7^{[22]}\right) 
	&= r_3r_2\left(x_2^{[22]},y_2^{[22]},z_2^{[22]},w_2^{[22]}\right).
\end{align*}
\end{thm}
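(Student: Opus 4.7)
The plan is to assemble the initial value space as the union of the 24 listed charts and verify two things: that on every overlap the displayed transition is a biholomorphism (so the gluing really defines a manifold), and that on every chart the system is a polynomial Hamiltonian system, so that no accessible singularity survives.

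First, I would invoke Proposition \ref{prop:Wi0_ham} to record that $\mathcal{H}^0$ is a polynomial Hamiltonian system on each of the three charts $W_{00}$, $W_{10}$, $W_{20}$, which together cover $\Sigma_\eta^0\times B$ with symplectic transitions given by the first two displayed relations. These form the three ``ambient'' charts of the atlas. Next, by Proposition \ref{prop:resolve_as}, the accessible singularities of $\mathcal{H}$ on $(\Sigma_\eta\setminus\Sigma_\eta^0)\times B$ reduce to the nine loci \eqref{eq:list_C} and the twelve loci \eqref{eq:list_P}. Proposition \ref{prop:resolve_C} supplies nine new coordinate systems $(x_j^{[kl]},y_j^{[kl]},z_j^{[kl]},w_j^{[kl]})$ on which the system is polynomial Hamiltonian and in which each $C_j^{(kl)}$ is visible as a coordinate hyperplane; Proposition \ref{prop:resolve_P} likewise supplies twelve $(\widetilde{x}_j^{[kl]},\widetilde{y}_j^{[kl]},\widetilde{z}_j^{[kl]},\widetilde{w}_j^{[kl]})$, each defined as a Weyl-group image of a Proposition \ref{prop:resolve_C} chart via the generators $r_i$ that carry the respective $P_j^{(kl)}$ to a previously resolved $C_j^{(kl)}$. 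The total count is $3+9+12=24$, matching the theorem.

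For the atlas property, I would observe that each transition listed is a composition of rational maps that are either (i) the symplectic transitions of Proposition \ref{prop:Wi0_ham}, (ii) the symplectic resolutions of Proposition \ref{prop:resolve_C}, or (iii) the birational Weyl-group generators $r_i$, which preserve the Hamiltonian system \eqref{eq:FST} and are therefore symplectic. On the overlap between any two charts, points where any of these rational maps would be indeterminate correspond precisely to an accessible singularity that has been replaced by a chart on exactly one side, so such points lie outside the overlap; hence the transition is biholomorphic on each overlap, and the symplectic form $dq_1\wedge dp_1+dq_2\wedge dp_2$ extends globally.

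The main obstacle is really the completeness check: that these 24 charts cover the resolved space with no leftover accessible singularity and no double-counted one. The nontrivial part is tracking, for each $P_j^{(kl)}$, the composition of $r_i$'s used in Proposition \ref{prop:resolve_P} and confirming that the image indeed falls on one of the $C_j^{(kl)}$ previously resolved in Proposition \ref{prop:resolve_C}, and that the resulting chart introduces no new accessible singularity of its own. Combined with Proposition \ref{prop:sol00}, which guarantees that any solution of $\mathcal{H}^0$ eventually visits $W_{00}$, this shows that every trajectory of $\mathcal{H}$, including those passing through accessible singularities, is traced by a holomorphic curve in the glued manifold, completing the construction of the initial value space.
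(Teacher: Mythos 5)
Your proposal matches the paper's own (implicit) argument: the theorem is presented there as a direct consequence of Propositions \ref{prop:Wi0_ham}, \ref{prop:resolve_as}, \ref{prop:resolve_C} and \ref{prop:resolve_P}, with the $3+9+12=24$ count and the symplectic/birational nature of the transition maps doing all the work. Your additional remarks on biholomorphy of the overlaps and on completeness via Proposition \ref{prop:sol00} are just the natural fleshing-out of what the paper leaves unstated.
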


This theorem follows from Propositions \ref{prop:resolve_C} and \ref{prop:resolve_P}.

\begin{rem}
As was seen in Section \ref{sec:intro}, Takenawa constructed another IVS of system \eqref{eq:FST} and gave a geometric interpretation in terms of the N\'{e}ron-Severi bilattice in \cite{Tak1}.
We have not clarified a relationship between this previous work and Theorem \ref{thm:main} yet, which is a future problem.
\end{rem}

\begin{rem}
There exists a unique polynomial Hamiltonian system of degree 5 with respect to $q_1,q_2,p_1,p_2$ which is holomorphic on each coordinates $(x_i,y_i,z_i,w_i)\ (i=0,\ldots,5)$.
Moreover, this Hamiltonian system coincides with \eqref{eq:FST}.
This fact was first mentioned by Sasano in \cite{Sas2}.
It is expected that we can show uniqueness in a similar manner as in \cite{ST}.
However, we have not succeeded in the proof yet.
\end{rem}

\appendix

\section{Proof of Lemma \ref{lem:as_list}}\label{sec:appendix}

We prove only for $W_{01}\times B$.
System \eqref{eq:W01_1} implies that the accessible singularities on $W_{01}\times B$ are given by
\[
	p_1^{(01)} = F_1^{(01)} = F_2^{(01)} = G_2^{(01)} = 0.
\]
We set
\begin{align*}
	\widetilde{F}_1^{(01)} 
	&= F_1^{(01)}\big|_{p_1^{(01)}=0}\\
	&= (q_1-1)\left\{2q_1(q_1-t)+(q_1+q_2)(q_2-t)p_2^{(01)}\right\},\\
	\widetilde{F}_2^{(01)} 
	&= F_2^{(01)}\big|_{p_1^{(01)}=0}\\
	&= (q_2-t)\left\{(q_1-1)(q_1+q_2)+2q_2(q_2-1)p_2^{(01)}\right\},\\
	\widetilde{G}_2^{(01)} 
	&= G_2^{(01)}\big|_{p_1^{(01)}=0}\\
	&= \left[\left\{2q_1^2-2q_1q_2-(t+1)q_1+2q_2\right\}+\left\{2q_1q_2-2q_2^2-2tq_1+(t+1)q_2\right\}p_2^{(01)}\right]p_2^{(01)}.
\end{align*}
Then the accessible singularities are given by a system of algebraic equations
\begin{equation}\label{eq:renritsu}
	\widetilde{F}_1^{(01)} = \widetilde{F}_2^{(01)} = \widetilde{G}_2^{(01)} = 0.
\end{equation}
In the following, we solve it for $q_1$, $q_2$ and $p_2^{(01)}$.

We first assume that $q_1-1=0$.
Under this condition, system \eqref{eq:renritsu} is described as
\[
	q_2(q_2-1)(q_2-t)p_2^{(01)} = \left[t-1+\left\{2q_2^2-(t+3)q_2+2t\right\}p_2^{(01)}\right]p_2^{(01)} = 0.
\]
Hence we obtain the solutions of \eqref{eq:renritsu} as
\begin{align}
	&q_1 - 1 = p_2^{(02)} = 0,\label{C2}\\ 
	&q_1 - 1 = q_2 = 2tp_2^{(02)} + t - 1 = 0,\label{P5}\\ 
	&q_1 - 1 = q_2 - 1 = p_2^{(02)} + 1 = 0,\label{C1sub1}\\
	&q_1 - 1 = q_2 - t = tp_2^{(02)} + 1 = 0\label{P6}.
\end{align}

We next assume that $q_1-1\neq0$ and $q_2-t=0$.
Under this condition, the system $\frac{\widetilde{F}_1^{(01)}}{q_1-1}=\widetilde{F}_2^{(01)}=\widetilde{G}_2^{(01)}=0$ is described as
\[
	q_1(q_1-t) = \left[\left\{2q_1^2-(3t+1)q_1+2t\right\}-t(t-1)p_2^{(01)}\right]p_2^{(01)} = 0.
\]
Hence we obtain the solutions of \eqref{eq:renritsu} as
\begin{align}
	&q_1 = q_2 - t = p_2^{(01)} = 0,\label{P3}\\
	&q_1 = q_2 - t = (t-1)p_2^{(01)} - 2 = 0,\label{P4}\\
	&q_1 - t  = q_2 - t = p_2^{(01)} = 0,\label{P1}\\
	&q_1 - t  = q_2 - t = p_2^{(01)} + 1 = 0\label{C1sub2}.
\end{align}

We next assume that $q_1-1\neq0$, $q_2-t\neq0$ and $p_2^{(01)}=0$.
Under this condition, the system $\frac{\widetilde{F}_1^{(01)}}{q_1-1}=\frac{\widetilde{F}_2^{(01)}}{q_2-t}=\widetilde{G}_2^{(01)}=0$ is described as
\[
	q_1(q_1-t) = q_1 + q_2 = 0.
\]
Hence we obtain the solutions of \eqref{eq:renritsu} as
\begin{align}
	&q_1 = q_2 = p_2^{(01)} = 0,\label{C31}\\
	&q_1 - t  = q_2 + t = p_2^{(01)} = 0\label{P2}.
\end{align}

In the last, we assume that $q_1-1\neq0$, $q_2-t\neq0$ and $p_2^{(01)}\neq0$.
Then the system $\frac{\widetilde{F}_1^{(01)}}{q_1-1}=\frac{\widetilde{F}_2^{(01)}}{q_2-t}=\frac{\widetilde{G}_2^{(01)}}{p_2^{(01)}}=0$ is described as
\begin{equation}\label{eq:mat}
	A\begin{pmatrix}1\\ p_2^{(01)}\end{pmatrix} = \begin{pmatrix}0\\ 0\end{pmatrix},\quad
	A = \begin{pmatrix}2q_1(q_1-t)&(q_1+q_2)(q_2-t)\\ (q_1-1)(q_1+q_2)&2q_2(q_2-1)\\ 2q_1^2-2q_1q_2-(t+1)q_1+2q_2&2q_1q_2-2q_2^2-2tq_1+(t+1)q_2\end{pmatrix}.
\end{equation}
It is necessary that all of the minor determinants of the matrix $A$ are zero, namely
\begin{align*}
	(q_1-q_2)X = (q_1-q_2)Y = (q_1-q_2)Z = 0,
\end{align*}
where
\begin{align*}
	&X = q_1^2q_2 - q_1q_2^2 - tq_1^2 - 3(t-1)q_1q_2 + q_2^2 + tq_1 - tq_2 ,\\
	&Y = 2q_1^2q_2 - 2q_1q_2^2 - 2tq_1^2 - 3(t-1)q_1q_2 + 2q_2^2 + 2tq_1 + (t-3)q_2 ,\\
	&Z = -2q_1^2q_2 + 2q_1q_2^2 + 2tq_1^2 + 3(t-1)q_1q_2 - 2q_2^2 - t(3t-1)q_1 + 2tq_2.
\end{align*}
We first consider the case $q_1-q_2=0$.
Then the matrix $A$ is described as 
\[
	A = \begin{pmatrix}2q_1(q_1-t)&2q_1(q_1-t)\\ 2q_1(q_1-1)&2q_1(q_1-1)\\ -(t-1)q_1&-(t-1)q_1\end{pmatrix},
\]
from which we obtain two solutions $q_1=q_2=0$ and $p_2^{(01)}+1=0$ for \eqref{eq:mat}.
We next consider the case $q_1-q_2\neq0$ and $X=Y=Z=0$.
Since $Y+Z=-3(t-1)(tq_1-q_2)=0$, we have
\[
	X = t(t-1)q_1(q_1+1)^2,\quad
	Y = t(t-1)q_1(q_1+1)(2q_1-1),\quad
	Z = -t(t-1)q_1(q_1+1)(2q_1-1).
\]
It implies
\[
	q_1 + 1 = q_2 + t = 0.
\]
Then the matrix $A$ is described as 
\[
	A = \begin{pmatrix}2(t+1)&2t(t+1)\\ 2(t+1)&2t(t+1)\\ -3(t-1)&-3t(t-1)\end{pmatrix},
\]
from which we obtain a solution $tp_2^{(01)}+1=0$ for \eqref{eq:mat}.
Hence we obtain three solutions of \eqref{eq:renritsu} as
\begin{align}
	&q_1 = q_2 = 0,\quad
	p_2^{(01)}\neq0,\label{C32}\\
	&q_1 - q_2 = p_2^{(01)} + 1 = 0,\label{C1}\\
	&q_1 + 1 = q_2 + t = tp_2^{(01)} + 1 = 0.\label{P7}
\end{align}

We have obtained 13 solutions.
Among them, solutions \eqref{C1sub1} and \eqref{C1sub2} are contained in \eqref{C1}.
Moreover, \eqref{C31} and \eqref{C32} can be summarized.
Therefore the accessible singularities $C_1^{(01)},\ldots,C_3^{(01)}$ are derived from \eqref{C1}, \eqref{C2}, \eqref{C31} and \eqref{C32} and $P_1^{(01)},\ldots,P_7^{(01)}$ from \eqref{P1}, \eqref{P2}, \eqref{P3}, \eqref{P4}, \eqref{P5}, \eqref{P6}, \eqref{P7}.

We can prove for the other $W_{ij}\times B$ in a similar manner.

\section{Proof of Lemma \ref{lem:C_connect}}\label{sec:appendixB}
Using the homogeneous coordinates \eqref{eq:hom_affine}, we have
\begin{equation}\label{eq:as_hom}\begin{split}
	C_1^{(01)}
	&= \left\{\left(\xi,\zeta^{(0)},t\right)\bigm|\xi_1-\xi_2=\zeta_0^{(0)}=\zeta_1^{(0)}+\zeta_2^{(0)}=0,\ \xi_0\neq0,\ \zeta_1^{(0)}\neq0\right\}\\
	&= \left\{\left([\xi_0:\xi_1:\xi_1],[0:1:-1],t\right)\bigm|\xi_0\neq0\right\},\\
	C_2^{(01)} 
	&= \left\{\left(\xi,\zeta^{(0)},t\right)\bigm|\xi_0-\xi_1=\zeta_0^{(0)}=\zeta_1^{(0)}=0,\ \xi_0\neq0,\ \zeta_2^{(0)}\neq0\right\}\\
	&= \left\{\left([\xi_0:\xi_0:\xi_2],[0:1:0],t\right)\bigm|\xi_0\neq0\right\},\\
	C_3^{(01)} 
	&= \left\{\left(\xi,\zeta^{(0)},t\right)\bigm|\xi_1=\xi_2=\zeta_0^{(0)}=0,\ \xi_0\neq0,\ \zeta_1^{(0)}\neq0\right\}\\
	&= \left\{\left([1:0:0],\left[0:\zeta_1^{(0)}:\zeta_2^{(0)}\right],t\right)\bigm|\zeta_1^{(0)}\neq0\right\},\\
	C_3^{(02)} 
	&= \left\{\left(\xi,\zeta^{(0)},t\right)\bigm|\xi_1=\xi_2=\zeta_0^{(0)}=0,\ \xi_0\neq0,\ \zeta_2^{(0)}\neq0\right\}\\
	&= \left\{\left([1:0:0],\left[0:\zeta_1^{(0)}:\zeta_2^{(0)}\right],t\right)\bigm|\zeta_2^{(0)}\neq0\right\},\\
	C_1^{(11)} 
	&= \left\{\left(\xi,\zeta^{(1)},t\right)\bigm|t\xi_0-\xi_2=\zeta_0^{(1)}=\zeta_1^{(1)}+t\zeta_2^{(1)}=0,\ \xi_1\neq0,\ \zeta_1^{(1)}\neq0\right\}\\
	&= \left\{\left([\xi_0:\xi_1,t\xi_0],[0:t:-1],t\right)\bigm|\xi_1\neq0\right\},\\
	C_3^{(11)} 
	&= \left\{\left(\xi,\zeta^{(1)},t\right)\bigm|\xi_0=\xi_2=\zeta_0^{(1)}=0,\ \xi_1\neq0,\ \zeta_1^{(1)}\neq0\right\}\\
	&= \left\{\left([0:1:0],\left[0:\zeta_1^{(1)}:\zeta_2^{(1)}\right],t\right)\bigm|\zeta_1^{(1)}\neq0\right\},\\
	C_2^{(12)}
	&= \left\{\left(\xi,\zeta^{(1)},t\right)\bigm|\xi_1-\xi_2=\zeta_0^{(1)}=\zeta_1^{(1)}=0,\ \xi_1\neq0,\ \zeta_2^{(1)}\neq0\right\}\\
	&= \left\{\left([\xi_0:\xi_1:\xi_1],[0:0:1],t\right)\bigm|\xi_1\neq0\right\},\\
	C_3^{(12)} 
	&= \left\{\left(\xi,\zeta^{(1)},t\right)\bigm|\xi_0=\xi_2=\zeta_0^{(1)}=0,\ \xi_1\neq0,\ \zeta_2^{(1)}\neq0\right\}\\
	&= \left\{\left([0:1:0],\left[0:\zeta_1^{(1)}:\zeta_2^{(1)}\right],t\right)\bigm|\zeta_2^{(1)}\neq0\right\},\\
	C_1^{(21)} 
	&= \left\{\left(\xi,\zeta^{(2)},t\right)\bigm|\xi_0-\xi_1=\zeta_0^{(2)}=\zeta_1^{(2)}+\zeta_2^{(2)}=0,\ \xi_2\neq0,\ \zeta_1^{(2)}\neq0\right\}\\
	&= \left\{\left([\xi_0:\xi_0:\xi_2],[0:1:-1],t\right)\bigm|\xi_2\neq0\right\},\\
	C_3^{(21)} 
	&= \left\{\left(\xi,\zeta^{(2)},t\right)\bigm|\xi_0=\xi_1=\zeta_0^{(2)}=0,\ \xi_2\neq0,\ \zeta_1^{(2)}\neq0\right\}\\
	&= \left\{\left([0:0:1],\left[0:\zeta_1^{(2)}:\zeta_2^{(2)}\right],t\right)\bigm|\zeta_1^{(2)}\neq0\right\},\\
	C_2^{(22)}
	&= \left\{\left(\xi,\zeta^{(2)},t\right)\bigm|t\xi_0-\xi_2=\zeta_0^{(2)}=\zeta_1^{(2)}=0,\ \xi_2\neq0,\ \zeta_2^{(2)}\neq0\right\}\\
	&= \left\{\left(\left[t^{-1}\xi_2:\xi_1:\xi_2\right],[0:0:1],t\right)\bigm|\xi_2\neq0\right\},\\
	C_3^{(22)}
	&= \left\{\left(\xi,\zeta^{(2)},t\right)\bigm|\xi_0=\xi_1=\zeta_0^{(2)}=0,\ \xi_2\neq0,\ \zeta_2^{(2)}\neq0\right\}\\
	&= \left\{\left([0:0:1],\left[0:\zeta_1^{(2)}:\zeta_2^{(2)}\right],t\right)\bigm|\zeta_2^{(2)}\neq0\right\}.
\end{split}\end{equation}
It implies
\[
	C_3^{(01)}\cup C_3^{(02)}\simeq\mathbb{P}^1,\quad
	C_3^{(11)}\cup C_3^{(12)}\simeq\mathbb{P}^1,\quad
	C_3^{(21)}\cup C_3^{(22)}\simeq\mathbb{P}^1.
\]
On the other hand, we obtain
\[
	C_1^{(01)} \cup C_2^{(12)} \simeq \mathbb{P}^1,\quad
	C_1^{(11)} \cup C_2^{(22)} \simeq \mathbb{P}^1,\quad
	C_2^{(02)} \cup C_1^{(21)} \simeq \mathbb{P}^1.
\]
The first relation is obtained from that
\begin{equation}\label{eq:C101_W1}\begin{split}
	C_1^{(01)}
	&= \left\{\left(\xi,\zeta^{(1)},t\right)\bigm|\xi_1-\xi_2=\zeta_0^{(1)}=\zeta_1^{(1)}=0,\ \xi_0\neq0,\ \xi_1\neq0,\ \zeta_2^{(1)}\neq0\right\}\\
	&= \left\{\left([\xi_0:\xi_1:\xi_1],[0:0:1],t\right)\bigm|\xi_0\neq0,\ \xi_1\neq0\right\}
\end{split}\end{equation}
under the condition $\xi_1\neq0$.
Note that relation \eqref{eq:C101_W1} is derived from that \eqref{eq:rel_hom}.
We can obtain the other relations in a similar manner.

We next consider the intersections of two of the six unions
\begin{align*}
	&C_3^{(01)} \cup C_3^{(02)},\quad
	C_3^{(11)} \cup C_3^{(12)},\quad
	C_3^{(21)} \cup C_3^{(22)},\\
	&C_1^{(01)} \cup C_2^{(12)},\quad
	C_1^{(11)} \cup C_2^{(22)},\quad
	C_2^{(02)} \cup C_1^{(21)}.
\end{align*}
Definitions of the accessible singularities \eqref{eq:as_hom} imply
\begin{align*}
	&\left(C_3^{(01)}\cup C_3^{(02)}\right) \cap \left(C_1^{(01)}\cup C_2^{(12)}\right)\\
	&= \left\{\left(\xi,\zeta^{(0)},t\right)\bigm|\xi_0=1,\ \xi_1=\xi_2=0,\ \zeta_0^{(0)}=0,\ \zeta_1^{(0)}=1,\ \zeta_2^{(0)}=-1\right\}\\
	&= \{([1:0:0],[0:1:-1],t)\},\\
	&\left(C_3^{(11)}\cup C_3^{(12)}\right) \cap \left(C_1^{(11)}\cup C_2^{(22)}\right)\\
	&= \left\{\left(\xi,\zeta^{(1)},t\right)\bigm|\xi_1=1,\ \xi_0=\xi_2=0,\ \zeta_0^{(1)}=0,\ \zeta_1^{(1)}=-t,\ \zeta_2^{(1)}=1\right\}\\
	&= \{([0:1:0],[0:-t:1],t)\},\\
	&\left(C_3^{(21)}\cup C_3^{(22)}\right) \cap \left(C_1^{(02)}\cup C_1^{(21)}\right)\\
	&= \left\{\left(\xi,\zeta^{(2)},t\right)\bigm|\xi_0=\xi_1=0,\ \xi_2=1,\ \zeta_0^{(2)}=0,\ \zeta_1^{(2)}=1,\ \zeta_2^{(2)}=-1\right\}\\
	&= \{([0:1:0],[0:-t:1],t)\}.
\end{align*}
Moreover, we obtain
\begin{align*}
	&\left(C_3^{(01)}\cup C_3^{(02)}\right) \cap \left(C_3^{(11)}\cup C_3^{(12)}\right) = \emptyset,\quad
	\left(C_3^{(01)}\cup C_3^{(02)}\right) \cap \left(C_3^{(21)}\cup C_3^{(22)}\right) = \emptyset,\\
	&\left(C_3^{(01)}\cup C_3^{(02)}\right) \cap \left(C_1^{(11)}\cup C_2^{(22)}\right) = \emptyset,\quad
	\left(C_3^{(01)}\cup C_3^{(02)}\right) \cap \left(C_2^{(02)}\cup C_1^{(21)}\right) = \emptyset,\\
	&\left(C_1^{(01)}\cup C_2^{(12)}\right) \cap \left(C_3^{(11)}\cup C_3^{(12)}\right) = \emptyset,\quad
	\left(C_1^{(01)}\cup C_2^{(12)}\right) \cap \left(C_3^{(21)}\cup C_3^{(22)}\right) = \emptyset,\\
	&\left(C_1^{(01)}\cup C_2^{(12)}\right) \cap \left(C_1^{(11)}\cup C_2^{(22)}\right) = \emptyset,\quad
	\left(C_1^{(01)}\cup C_2^{(12)}\right) \cap \left(C_2^{(02)}\cup C_1^{(21)}\right) = \emptyset,\\
	&\left(C_3^{(11)}\cup C_3^{(12)}\right) \cap \left(C_3^{(21)}\cup C_3^{(22)}\right) = \emptyset,\quad
	\left(C_3^{(11)}\cup C_3^{(12)}\right) \cap \left(C_2^{(02)}\cup C_1^{(21)}\right) = \emptyset,\\
	&\left(C_1^{(11)}\cup C_2^{(22)}\right) \cap \left(C_3^{(21)}\cup C_3^{(22)}\right) = \emptyset,\quad
	\left(C_1^{(11)}\cup C_2^{(22)}\right) \cap \left(C_2^{(02)}\cup C_1^{(21)}\right) = \emptyset.
\end{align*}
The first relation is obtained from those
\begin{align*}
	&\left(C_1^{(01)}\cup C_2^{(12)}\right) \cap \left(C_1^{(11)}\cup C_2^{(22)}\right)\\
	&= \left(C_1^{(01)} \cap C_1^{(11)}\right) \cup \left(C_1^{(01)} \cap C_2^{(22)}\right) \cup \left(C_2^{(12)} \cap C_1^{(11)}\right) \cup \left(C_2^{(12)} \cap C_2^{(22)}\right)
\end{align*}
and
\[
	C_1^{(01)} \cap C_1^{(11)} = \emptyset,\quad
	C_1^{(01)} \cap C_2^{(22)} = \emptyset,\quad
	C_2^{(12)} \cap C_1^{(11)} = \emptyset,\quad
	C_2^{(12)} \cap C_2^{(22)} = \emptyset.
\]
Note that the relation $C_1^{(01)}\cap C_1^{(11)}=\emptyset$ is derived from that \eqref{eq:C101_W1}.
We can obtain the other relations in a similar manner.

In the last, we consider the accessible singularity $P^0$. 
We choose the point $P_3^{(01)}\subset P^0$ as an example.
It is described as
\begin{align*}
	P_3^{(01)} 
	&= \left\{\left(\xi,\zeta^{(0)},t\right)\bigm|\xi_1=\xi_2-t\xi_0=\zeta_0^{(0)}=\zeta_2^{(0)}=0,\ \xi_0\neq0,\ \zeta_1^{(0)}\neq0\right\}\\
	&=\left\{\left([1:0:t],[0:1:0],t\right)\right\}.
\end{align*}
Recall that the point $P_3^{(01)}$ is equivalent to that
\begin{align*}
	P_3^{(21)} 
	&= \left\{\left(\xi,\zeta^{(2)},t\right)\bigm|\xi_1=\xi_2-t\xi_0=\zeta_0^{(2)}=\zeta_2^{(2)}=0,\ \xi_2\neq0,\ \zeta_1^{(2)}\neq0\right\}\\
	&=\left\{\left([1:0:t],[0:1:0],t\right)\right\}.
\end{align*}
We can see that $P_3^{(01)}=P_3^{(21)}$ is not contained in $C^0$.
We can prove for the other points in a similar manner.

\section*{Acknowledgement}
This work was supported by JSPS KAKENHI Grant Number 20K03645.
The author declares no conflicts of interest associated with this manuscript.

\section*{Data availability}
Data sharing not applicable to this article as no datasets were generated or analyzed during the current study.


\end{document}